\theoremstyle{plain}
\newtheorem{theorem}{Theorem}
\newtheorem{general}{General Theorem}
\newtheorem{lemma}{Lemma}
\newtheorem{proposition}{Proposition}
\theoremstyle{definition}
\theoremstyle{remark}
\newtheorem{remark}{Remark}
\newtheorem{case}{Case}
\numberwithin{equation}{section}
\begin{document}

\title{On the Structure of the Continued Fraction of $\sqrt{d}$}

\author {Amrik Singh Nimbran}
\address{B3-304, Palm Grove Heights, Ardee City, Gurugram, Haryana, India 122003.}
\email{amrikn622@gmail.com}

\keywords{Continued fraction, quadratic irrational, Fibonacci number, linear recurrence, unimodular matrix, Chebyshev polynomial of the second kind}
\subjclass{11A55, 11Y65}

\begin{abstract}
We examine the structure of the continued fraction  of $\sqrt{d}$, and give formulae for the repeated partial quotients in its period.
\end{abstract}

\maketitle

Motivated by discussion in Kraitchik's book \cite[Ch.III]{kraitchik}, I examined the period of the continued fraction expansion of $\sqrt{d}$ with $d$ a non-square positive integer. The aim was to seek patterns in the structure of its period. My investigation resulted in some new formulas relating to the structure of $\sqrt{d}$'s period: its \emph{central term} when the period-length is even, and the \emph{repeated partial quotients}. The paper is primarily a discussion of the methods used in obtaining these formulas. We will use linear recurrences and $2\times 2$ \emph{unimodular} matrices whose determinant equals $\pm 1.$ A `cousin' of Chebyshev polynomial of the second kind will also be introduced in the paper.

\section{Introduction}

Irrational square roots entered into mathematics with the Pythagorean theorem which led to the discovery of the irrationality of $\sqrt{2}$. R. Bombelli (\emph{Algebra}, 1572) and P. Cataldi (\emph{Trattato del modo brevissimo di trouare la Radici quadra delli numeri}, 1613), both from Bologna (Italy), introduced continued fractions for square roots.

For a real number $\alpha$, an expression of the form
\[
\alpha = a_0+\cfrac{1}{a_1+ \cfrac{1}{a_2+ \cfrac{1}{a_3 +\dotsb}}}
\]
with $a_i \in \mathbb{N}$ for $i \ge 1,$ is called a \emph{simple continued fraction} (scf) of $\alpha$. It is generally denoted by
a space-saving symbolism: $\left[a_0; a_1, a_2, a_3, \dots \right].$ The integers $a_i$'s are called \emph{partial quotients}. The rational number represented by the truncated continued fraction $\left[a_0; a_1, a_2, a_3, \dots, a_n\right]$, having initial $n+1$ terms from $a_0$ up to and including $a_n$, is called the $n$th \emph{convergent} ($c_n$) of $\alpha$. (The calculation is carried out in reverse order beginning from tail.)

Define the sequences $\lbrace p_k\rbrace$, $\lbrace q_k \rbrace$:
\begin{gather}
p_{-2} =0, \quad p_{-1} =1, \quad p_{k} = a_{k}~ p_{k-1} + p_{k-2} \; \text{for} \; k\ge 0, \\
q_{-2} =1, \quad q_{-1} =0, \quad q_{k} = a_{k}~ q_{k-1} + q_{k-2} \; \text{for} \; k\ge 0.
\end{gather}
Then $\displaystyle [a_0; a_1, \dots, a_k]=\frac{p_k}{q_k}.$

If the quotients repeat from a point $r$ onward, i.e., $a_{m\ell+r+k} =a_{r+k}, \, m\in \mathbb{N}, \, 0\le k\le \ell$, with period's length $\ell$, then the scf is said to be \emph{periodic} and written as $\displaystyle \left[a_0; a_1, a_2, \dots a_{r-1}, \overline{a_r, a_{r+1}, a_{r+2}, \dots, a_{\ell+r-1}}\right].$ Euler \cite{euler71} proved that the value of every periodic scf is a quadratic irrational of the form $\frac{P +\sqrt{d}}{Q}$ with $P, Q \in \mathbb{Z}, ~ Q\ne 0$ and $d \in \mathbb{Z}^+$ not a perfect square. It is a solution of some quadratic equation having integral coefficients: $ax^2 +bx+c=0$ ($a\ne 0$) whose \emph{discriminant} $d:=b^2 -4ac$ is not a perfect square. In 1770, Lagrange proved the converse of Euler's theorem that each quadratic irrational has a periodic scf expansion. If the scf repeats from the beginning, $[ \overline{a_0, a_1, \dots, a_n} ]$, it is called \emph{purely} periodic. Galois proved in 1828 that the scf of $x=\frac{P +\sqrt{d}}{Q}$ is purely periodic if and only if $x > 1$ and its conjugate $\bar{x} =\frac{P -\sqrt{d}}{Q}$ satisfies $-1<\bar{x} <0$.

We describe here the continued fraction algorithm for $\sqrt{d}$ for the sake of completeness. (See \cite[Ch 8.4]{sierpinski} for more details.) To begin with, we put $a_0 =\left[\sqrt{d}\right]=P_1$, the greatest integer $< \sqrt{d}$, and $d-a_0^2 =Q_1$. Then
\[
\displaystyle \sqrt{d} = a_0 + (\sqrt{d} - a_0) =a_0 +\frac{{d -a_0^2}}{\sqrt{d} +a_0} = a_0 +\cfrac{1}{\frac{\sqrt{d} +P_1}{Q_1}}.
\]
Now add to and subtract from $\sqrt{d} +P_1$ a positive integer $P_2$, the greatest integer $< \sqrt{d}$, such that $Q_1$ divides $P_1 +P_2$ to get: $a_1=\frac{P_1 +P_2}{Q_1}$. So we have
\[
\frac{\sqrt{d} +P_1}{Q_1} = a_1 +\frac{\sqrt{d} -P_2}{Q_1} =a_1 +\frac{d -P^2_2}{Q_1 (\sqrt{d} +P_2)}; \quad \text{thus} \;
a_1 =\left[ \frac{1}{\sqrt{d} -a_0} \right].
\]
Again add to and subtract from $\sqrt{d} +P_2$ a positive integer $P_3$, the greatest integer $< \sqrt{d}$, such that $Q_2$ divides $P_2 +P_3$ to get: $a_2=\frac{P_2 +P_3}{Q_2}$. So we have
\[
\frac{\sqrt{d} +P_2}{Q_2} = a_2 +\frac{\sqrt{d} -P_3}{Q_2} =a_2 +\frac{d -P^2_3}{Q_2 (\sqrt{d} +P_3)}; \quad \text{thus} \;
a_2 =\left[ \frac{1}{\frac{1}{\sqrt{d} -a_0} - a_1} \right].
\]
We repeat the process until we get $Q_n =1$ for some $n$ when the partial quotients will begin to repeat themselves. Since $1\le P_i \le [\sqrt{d}]$, $P_i$ can take on only a limited number of integral values. Consequently, $Q_i$ too assumes the same number of associated values as we have: $Q_n Q_{n-1} =d -P_n^2; \, a_n Q_n =P_n +P_{n+1}.$ (See Theorem 8.1 in \cite[pp.262-263]{hua}.)

The period of the scf of $\sqrt{d}$, with $d$ not a square, is symmetrical and for period length $\ell$ its form is:
$
\sqrt{d} = \left[a_0;~ \overline{a_1, a_2, a_3, \dots, a_{\ell-3}, a_{\ell-2}, a_{\ell-1}, 2a_0} \right]
$
with $a_1 =a_{\ell-1}, \, a_2=a_{\ell -2}, \dots$ So, $a_1, a_2, \dots, a_{\ell-1}$ form a palindrome. The palindrome may or may not have a central term. When $\ell(d) =2n$, the period is symmetric around $a_n$ with $a_{n+1}=a_{n-1}, \, a_{n+2}=a_{n-2}$ and so on. If $\ell(d) =2n+1$, then $a_{n+1}=a_{n}, \, a_{n+2}=a_{n-1}$ and so on. $\ell (d)$ is odd if and only if $d=a^2+b^2, \, (a, b)=1$.\cite{arnold, rippon} In the scf of $\sqrt{d}$ with period length $\ell$, each partial quotient $a_k$ for $0 \le k < \ell$ satisfies $a_k < \sqrt{d}$. \cite[p.245, 3(f)]{LeVeque}

\section{Tools and preliminary results}

\subsection{Linear recurrences}

The general form of linear recurrences is: $u_{n+1} =a~ u_{n} +b~ u_{n-1}$ where $a \in \mathbb{Z}^{+}, \; b \in \mathbb{Z}\setminus \lbrace0\rbrace$, and two initial values $u_0, \, u_1$ are given. Its characteristic equation is: $x^2 -a x -b=0.$ If $\alpha >0, \, \beta <0$ are its roots, then $\alpha +\beta =a, \; \alpha \cdot \beta =-b, \; \alpha -\beta = \sqrt{a^2 +4b}.$ If two numbers $\lambda, \, \mu$ can be chosen which satisfy $\lambda +\mu =u_0; \; \alpha \lambda +\beta \mu =u_1,$ then $u_n = \lambda \alpha^n +\mu \beta^n$ for $n = 0, 1, 2, \dots$ \cite[p.199, Th.4.10, eq(4.5)]{niven}

The recurrences appearing in this paper mostly have $u_0=0, \, u_1=1$ and: (i). $a=2, b=1$; (ii). $a=4, b=-1$; (iii). $a=2m+1 \, (m\in \mathbb{N}_0), \, b=1$ and (iv). $a=m, \, b=1; \; a=4m, \, b=m \, (m\in \mathbb{N}).$

\subsection{Two useful expansions}

Consider the quadratic equation:
\begin{equation}
x^2 -(2m+1)x -1=0,                \label{quadeq}
\end{equation}
whose two roots are given by
\begin{equation}
x=\frac{2m+1 \pm \sqrt{(2m+1)^2 +4}}{2}.  \label{root}
\end{equation}

Now the equation \eqref{quadeq} can be written as
\begin{equation}
x = (2m+1) +\frac{1}{x}.     \label{quadeq2}
\end{equation}
Putting the RHS of \eqref{quadeq2} for $x$ repeatedly, and the positive root from \eqref{root} on the LHS of the preceding equation leads to:
\[
\frac{2m+1 +\sqrt{(2m+1)^2 +4}}{2} =2m+1+
\cfrac{1}{2m+1+
\cfrac{1}{2m+1+
\cfrac{1}{2m+1+
\dotsb
}}}
\]
which is usually denoted by $[2m+1; \overline{2m+1}]$. The convergents of this continued fraction are given by
\[
\frac{u_2}{u_1}, \, \frac{u_3}{u_2}, \dots, \frac{u_{n+1}}{u_n}, \dots
\]
where $u_n$ ($n=0, 1, 2, \dots)$ has the closed form:

\begin{equation}
u_n = \frac{\left(2m+1+\sqrt{(2m+1)^2+4}\right)^n -\left(2m+1 -\sqrt{(2m+1)^2+4} \right)^n}{2^n ~ \sqrt{(2m+1)^2+4}}. \label{power}
\end{equation}
Note that $u_{n}$ is even when $n=3r$, and odd when $n= 3r+1, \, 3r+2.$

$m=0$ yields a purely periodic scf for the \emph{golden ratio}:
\begin{equation*}
\varphi: = \frac{1 +\sqrt{5}}{2} = 1 +\cfrac{1}{1 +\cfrac{1}{1+\cfrac{1}{1+\dotsb}}}
\end{equation*}
$\varphi$'s convergents are:
\[
\frac{1}{1}, \frac{2}{1}, \frac{3}{2}, \frac{5}{3}, \frac{8}{5}, \frac{13}{8}, \frac{21}{13}, \dots, \frac{F_{n+1}}{F_n}, \dots
\]
where $F_n$ being the $n$-th the Fibonacci number. The Fibonacci sequence $\left(F_n\right)_{n\ge 0}$ is given by the recurrence relation: $\displaystyle F_{n+1} = F_{n} + F_{n-1}$ for $n\ge 1$ and $ F_{0}=0, \; F_{1}=1.$
With $ \displaystyle a=b=1, \alpha =\frac{1+\sqrt{5}}{2}, \, \beta = \frac{1-\sqrt{5}}{2}, \lambda =\frac{1}{\sqrt{5}}, \mu =-\frac{1}{\sqrt{5}}$, we get the formula given by Euler\cite[\S 5]{euler326} and later by Binet\cite[p.563]{binet}:
\[
F_n =\frac{1}{\sqrt{5}} \left[ \left(\frac{1 +\sqrt{5}}{2}\right)^n - \left(\frac{1 -\sqrt{5}}{2} \right)^n\right].
\]

Lucas\cite[(21)]{lucas} recorded this representation ($n\ge 1$):
\[
\frac{1}{2} \frac{(1 +\sqrt{5})^{n+1} -(1 -\sqrt{5})^{n+1}}{(1 +\sqrt{5})^{n} -(1 -\sqrt{5})^{n}}
= 1 +\cfrac{1}{1 +\cfrac{1}{1+\cfrac{1}{1+\dotsb}}}
\]

It is obvious that $x^2 -2mx -k =0$ ($m\in \mathbb{N}, \, 1\le k \le m$) yields the quadratic irrational $m \pm \sqrt{m^2+k}.$
The equation: $\displaystyle x^2 -2x-1=0$, having roots: $1 \pm \sqrt{2},$ can be rewritten as:
\[
x^2 = 2x +1 \Longrightarrow x = 2 +\frac{1}{x}.
\]
Substituting $2 +\frac{1}{x}$ for $x$ repeatedly on the RHS with the positive root on the LHS yields
\begin{equation*}
1 +\sqrt{2} = 2 +\cfrac{1}{2 +\cfrac{1}{2+\cfrac{1}{2+\dotsb}}} \Longrightarrow \sqrt{2} = 1 +\cfrac{1}{2 +\cfrac{1}{2+\cfrac{1}{2+\dotsb}}}
\end{equation*}
where the left scf is purely periodic unlike the right one. In general, the scf of $\sqrt{d}$ is not purely periodic while that of  $[\sqrt{d}] +\sqrt{d}$, where $[\sqrt{d}]$ denotes the greatest integer less than $\sqrt{d},$ is purely periodic.

The convergents of $\sqrt{2} =[1; \overline{2}]$ are given by $\displaystyle \frac{p_{n+1}}{q_{n+1}} =\frac{2p_{n} +p_{n-1}}{2q_{n} +q_{n-1}}$ for $n\ge 1, \, p_0=1, q_0=0, p_1=1, q_1=1$:
\[
\frac{1}{1}, \; \frac{3}{2}, \; \frac{7}{5}, \; \frac{17}{12}, \; \frac{41}{29}, \; \frac{99}{70}, \; \frac{239}{169}, \dots
\]
The numerators and denominators occur in \url{https://oeis.org/A001333} and \url{https://oeis.org/A000129} with the closed forms:
\[
p_{k}= \frac{(1+\sqrt{2})^{k} +(1 -\sqrt{2})^{k}}{2}; \quad q_{k}= \frac{(1+\sqrt{2})^{k} -(1 -\sqrt{2})^{k}}{2\sqrt{2}}.
\]
from these two closed forms we obtain
\begin{equation}
p_{k} +q_{k} = \frac{(1+\sqrt{2})^{k+1} -(1 -\sqrt{2})^{k+1}}{2\sqrt{2}} =q_{k+1}, \label{root2a}
\end{equation}
and
\begin{equation}
p_{k} +2q_{k} = \frac{(1+\sqrt{2})^{k+1} +(1 -\sqrt{2})^{k+1}}{2} =p_{k+1}. \label{root2b}
\end{equation}

\subsection{Matrix method in continued fractions}

\subsubsection{A correspondence between matrices and convergents}
Given a sequence $a_0, a_1, a_2, \dots,$ and defining $ A_k:= \begin{bmatrix}
a_{k} & 1\\
1 & 0
\end{bmatrix},$ we have this correspondence:
\begin{equation}
\prod_{k=0}^{n} A_k =
\begin{bmatrix}
p_{n} & p_{n-1}\\
q_{n} & q_{n-1}
 \end{bmatrix}
 \quad \text{for} \; n=0, 1, 2, \dots,
\label{R1}
\end{equation}
if and only if \; $\displaystyle \frac{p_n}{q_n} = [a_0; a_1, \dots, a_n]$ for $n=0, 1, 2, \dots .$ This sets up a correspondence between certain products of $2\times 2$ matrices and continued fractions. (see \cite[p.28]{borwein} \cite{frame} \cite[p.244]{hurwitz} \cite[p.99]{poorten85} \cite[p.87]{poorten90}). Note that all the matrices occurring in the product are unimodular with $\det \begin{vmatrix}
p_{n} & p_{n-1}\\
q_{n} & q_{n-1}
\end{vmatrix}
=p_{n} q_{n-1} -q_{n} p_{n-1}
=(-1)^{n-1}.$
Writing $\alpha =a_0 +\frac{1}{\alpha_1} \; (\alpha_1>1), \, \alpha_1 =a_1 +\frac{1}{\alpha_2} \; (\alpha_2>1)$ and $\alpha_n =a_n +\frac{1}{\alpha_{n+1}} \; (\alpha_{n+1}>1)$, Davenport \cite[80, eq(14)]{davenport} obtained his ``most serviceable formula" expressing $\alpha$ in terms of the \emph{complete quotient} $\alpha_{n+1}$ and the two convergents $\frac{A_n}{B_n}$ \& $\frac{A_{n-1}}{B_{n-1}}$
\begin{equation}
\alpha =\frac{\alpha_{n+1} A_n +A_{n-1}}{\alpha_{n+1} B_n +B_{n-1}}  \label{daven}
\end{equation}
Using \eqref{daven} for period length $\ell,$ Rippon and Taylor deduced for $\sqrt{d} =\sqrt{a^2 +b}$ their Lemma 1(a) which we will use in our proofs:
\begin{lemma}
\begin{equation}
a = a_0; \qquad b = \frac{2 a_0 B +C}{A} \label{rip}
\end{equation}
$A, B, C$ being the elements of
$
\begin{bmatrix}
A & B\\
B & C
\end{bmatrix}
=
\begin{bmatrix}
p_{\ell -1} & p_{\ell -2}\\
q_{\ell -1} & q_{\ell -2}
\end{bmatrix}.
$
\end{lemma}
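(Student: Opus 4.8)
The first equality is essentially a matter of definition: writing $d = a^2 + b$ with $a = \lfloor\sqrt d\rfloor$ forces $a^2 < d < (a+1)^2$, hence $1 \le b \le 2a$ and $a = \lfloor\sqrt d\rfloor = a_0$. So the whole content lies in the formula for $b = d - a_0^2$, and the plan is to extract it from a single pass through the period by means of Davenport's formula \eqref{daven}.

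First I would identify the complete quotient $\alpha_\ell$ reached after the block $a_1, a_2, \dots, a_{\ell-1}, 2a_0$. By periodicity $\alpha_{\ell+1} = \alpha_1$, and the algorithm gives $1/\alpha_1 = \sqrt d - a_0$; since $a_\ell = 2a_0$ this yields $\alpha_\ell = 2a_0 + 1/\alpha_{\ell+1} = a_0 + \sqrt d$. Substituting $n = \ell-1$ and $\alpha_{\ell} = a_0 + \sqrt d$ into \eqref{daven} gives
\[
\sqrt d = \frac{(a_0+\sqrt d)\,p_{\ell-1} + p_{\ell-2}}{(a_0+\sqrt d)\,q_{\ell-1} + q_{\ell-2}}.
\]
Clearing the denominator and comparing the rational and the $\sqrt d$ parts (independent over $\mathbb{Q}$, as $\sqrt d$ is irrational) produces the pair of relations
\[
p_{\ell-1} = a_0\,q_{\ell-1} + q_{\ell-2}, \qquad d\,q_{\ell-1} = a_0\,p_{\ell-1} + p_{\ell-2}.
\]
The first is just the convergent recurrence (and re-confirms $a = a_0$); the second is the one carrying the arithmetic of $d$.

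To put $b = d - a_0^2$ into the stated shape I would use the palindrome $a_1, \dots, a_{\ell-1}$. Setting $\begin{bmatrix} A & B \\ B & C\end{bmatrix} := \prod_{k=1}^{\ell-1} A_k$, the symmetry of each $A_k$ together with the palindrome makes this product equal to its own transpose (transposition reverses the order of the factors, which the palindrome leaves unchanged), so it is symmetric; left-multiplying by $A_0$ recovers the convergent matrix by \eqref{R1}, which identifies $q_{\ell-1} = A$, $q_{\ell-2} = B$ and $p_{\ell-2} = a_0 B + C$. Feeding the second relation into $b = d - a_0^2$ and using the first to cancel the $a_0^2\,q_{\ell-1}$ term reduces $b$ to $(a_0 q_{\ell-2} + p_{\ell-2})/q_{\ell-1}$, which upon substitution becomes exactly $b = (2a_0 B + C)/A$.

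The main obstacle I anticipate is the bookkeeping around this last matrix step: one must be careful that the symmetric matrix $\begin{bmatrix} A & B \\ B & C\end{bmatrix}$ is the product over the period's palindromic block, and track precisely how its entries sit inside $\begin{bmatrix} p_{\ell-1} & p_{\ell-2} \\ q_{\ell-1} & q_{\ell-2}\end{bmatrix}$ after the factor $A_0$ is restored. A convenient check that no index has slipped is to run the derivation on $\sqrt2 = [1;\overline2]$ and $\sqrt3 = [1;\overline{1,2}]$, where $b = 1$ and $b = 2$ respectively drop straight out of $(2a_0 B + C)/A$.
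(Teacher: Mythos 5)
Your proof is correct, but there is nothing in the paper to compare it against: the paper does not prove this lemma at all, it simply imports it from Rippon and Taylor with the remark that it was ``deduced using \eqref{daven}.'' Your argument reconstructs precisely that missing derivation: identifying the complete quotient $\alpha_{\ell} = a_0 + \sqrt{d}$ from periodicity, applying \eqref{daven} at $n = \ell-1$, and separating rational and irrational parts to obtain $p_{\ell-1} = a_0 q_{\ell-1} + q_{\ell-2}$ and $d\,q_{\ell-1} = a_0 p_{\ell-1} + p_{\ell-2}$; the final substitution $b = \bigl(a_0 q_{\ell-2} + p_{\ell-2}\bigr)/q_{\ell-1} = (2a_0 B + C)/A$ is exactly right, and the degenerate case $\ell = 1$ (empty palindrome, identity matrix, $b=1$) also checks out.

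One thing you did that is genuinely valuable and worth making explicit: the lemma as printed is notationally inconsistent with the paper's own definition of $p_k, q_k$ as convergents of $\sqrt{d}$ beginning with $a_0$. Under that literal reading the matrix $\begin{bmatrix} p_{\ell-1} & p_{\ell-2}\\ q_{\ell-1} & q_{\ell-2}\end{bmatrix}$ need not even be symmetric (for $\sqrt{7}=[2;\overline{1,1,1,4}]$ it is $\begin{bmatrix} 8 & 5\\ 3 & 2\end{bmatrix}$), and the formula for $b$ fails (for $\sqrt{2}$ it would give $b=2$ instead of $1$). Your reading --- that $\begin{bmatrix} A & B\\ B & C\end{bmatrix}$ is the product $\prod_{k=1}^{\ell-1} A_k$ over the palindromic block, which is symmetric because each factor is symmetric and transposition reverses an order the palindrome leaves invariant, and whose entries sit inside the full convergent matrix via $q_{\ell-1}=A$, $q_{\ell-2}=B$, $p_{\ell-2}=a_0B+C$ after restoring the factor $A_0$ --- is the correct interpretation (it matches Rippon--Taylor's own indexing, where the $p$'s and $q$'s are convergents of the periodic part $[a_1;a_2,\dots]$). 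So your proposal not only proves the statement but also repairs the one ambiguity in how it is stated.
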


Two irrational numbers $\alpha$ and $\beta$ are said to be \emph{equivalent} if $A \alpha =\beta$ for some $A =
\begin{bmatrix}
a_{11} & a_{12}\\
a_{21} & a_{22}
\end{bmatrix}$
with integer elements and $\det A=\pm 1.$ It is defined by the rule: $A \alpha = \displaystyle  \frac{a_{11} \alpha +a_{12}}{a_{21} \alpha + a_{22}}.$ \cite[p.37]{borwein}

\subsubsection{Powers of a matrix associated with convergents of $\sqrt{2}$}
As Khovanskii \cite[p.292, Ex.20]{khovanskii} states, the matrix
$\begin{bmatrix}
1 & 2\\
1 & 1
\end{bmatrix}
=
\begin{bmatrix}
p_1 & 2q_1\\
q_1 & p_1
\end{bmatrix}$
leads to $\sqrt{2}=[1; \overline{2}]$.

\begin{lemma}\label{lemma1}
\begin{equation}
\begin{bmatrix}
1 & 2\\
1 & 1
\end{bmatrix}^{k}
=
\begin{bmatrix}
p_k & 2q_k\\
q_k & p_k
\end{bmatrix}. \label{root2c}
\end{equation}
\end{lemma}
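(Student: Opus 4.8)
The plan is to prove \eqref{root2c} by induction on $k$, using the matrix correspondence together with the two identities \eqref{root2a} and \eqref{root2b} already established for the convergents of $\sqrt{2}$. The base case $k=1$ is immediate, since the right-hand side reads $\begin{bmatrix} p_1 & 2q_1 \\ q_1 & p_1 \end{bmatrix} = \begin{bmatrix} 1 & 2 \\ 1 & 1 \end{bmatrix}$ by the initial values $p_1=q_1=1$. For the inductive step, I would assume the claimed form holds for exponent $k$ and then compute the product with one more factor of $\begin{bmatrix} 1 & 2 \\ 1 & 1 \end{bmatrix}$.

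First I would carry out the matrix multiplication explicitly:
\begin{equation*}
\begin{bmatrix} p_k & 2q_k \\ q_k & p_k \end{bmatrix}
\begin{bmatrix} 1 & 2 \\ 1 & 1 \end{bmatrix}
=
\begin{bmatrix} p_k + 2q_k & 2p_k + 2q_k \\ q_k + p_k & 2q_k + p_k \end{bmatrix}.
\end{equation*}
The goal is then to recognize each entry as the corresponding entry of $\begin{bmatrix} p_{k+1} & 2q_{k+1} \\ q_{k+1} & p_{k+1} \end{bmatrix}$. The diagonal entries $p_k + 2q_k$ are exactly $p_{k+1}$ by \eqref{root2b}. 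The lower-left entry $q_k + p_k$ equals $q_{k+1}$ by \eqref{root2a}. The upper-right entry $2p_k + 2q_k = 2(p_k + q_k) = 2q_{k+1}$, again invoking \eqref{root2a}. Thus all four entries match the $(k+1)$-case, closing the induction.

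I do not anticipate a genuine obstacle here: the entire argument reduces to the bookkeeping of matching matrix entries against the two precomputed closed-form identities \eqref{root2a} and \eqref{root2b}. The only point requiring mild care is the consistency of the two off-diagonal entries, since one must verify that the single convergent recurrence $p_k + q_k = q_{k+1}$ simultaneously handles both the lower-left entry and (after factoring out the $2$) the upper-right entry; this is where the symmetric structure of the matrix $\begin{bmatrix} p_k & 2q_k \\ q_k & p_k \end{bmatrix}$ and the specific coefficient $2$ corresponding to $d=2$ conspire to keep the form invariant under multiplication. An alternative, equally clean route would bypass induction entirely: diagonalize $\begin{bmatrix} 1 & 2 \\ 1 & 1 \end{bmatrix}$, whose eigenvalues are $1 \pm \sqrt{2}$, and read off the $k$-th power directly from the spectral decomposition, recovering the closed forms for $p_k$ and $q_k$ stated just before the lemma. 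I would present the inductive proof as the primary argument for its brevity.
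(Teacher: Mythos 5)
Your proof is correct and is essentially identical to the paper's: the same induction on $k$, the same matrix multiplication in the inductive step, and the same use of the identities \eqref{root2a} and \eqref{root2b} to identify the entries as $p_{k+1}$, $q_{k+1}$, and $2q_{k+1}$. The alternative diagonalization route you mention would also work, but the paper, like you, presents the inductive argument.
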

\begin{proof}
The statement is obviously true for $k=1.$ Assume it to be true $k=m$ so that
\[
\begin{bmatrix}
1 & 2\\
1 & 1
\end{bmatrix}^{m}
=
\begin{bmatrix}
p_m & 2q_m\\
q_m & p_m
\end{bmatrix}.
\]
Now
\[
\begin{bmatrix}
p_m & 2q_m\\
q_m & p_m
\end{bmatrix}.
\cdot
\begin{bmatrix}
1 & 2\\
1 & 1
\end{bmatrix}
=
\begin{bmatrix}
p_m +2q_m & 2p_m +2q_m\\
p_m +q_m & p_m +2q_m
\end{bmatrix}
=
\begin{bmatrix}
p_{m+1} & 2q_{m+1}\\
q_{m+1} & p_{m+1}
\end{bmatrix},
\]
by using \eqref{root2a} and \eqref{root2a}. Thus we get
\[
\begin{bmatrix}
1 & 2\\
1 & 1
\end{bmatrix}^{m+1}
=
\begin{bmatrix}
p_{m+1} & 2q_{m+1}\\
q_{m+1} & p_{m+1}
\end{bmatrix}
\]
The statement is thus true for $k=m+1$ also; hence for all $k.$
\end{proof}

\subsubsection{Matrix associated with $\sqrt{3}$}

The convergents $\displaystyle \frac{p_k}{q_k}$ of $\sqrt{3} =[1; \overline{1, 2}]$ are:
\[
\frac{2}{1}, \, \frac{5}{3}, \, \frac{7}{4}, \, \frac{19}{11}, \, \frac{26}{15}, \, \frac{71}{41}, \, \frac{97}{56}, \, \frac{265}{153}, \dots
\]
whose numerators occur appear in sequence A048788 at \url{https://oeis.org/A048788} and denominators in sequence A002530 at \url{https://oeis.org/A002530}. Note these relations:
\begin{align}
& p_{2n-1} =q_{2n} -q_{2n-1}; \; p_{2n+1} =q_{2n} +q_{2n+1};  \label{3a}\\
& 3q_{2n} +2q_{2n-1} =q_{2n+2}; \; 3q_{2n-1} +q_{2n-2} =q_{2n+1}. \label{3b}
\end{align}
The closed forms are:
\begin{align*}
& p_{2n-1} = \frac{(1+\sqrt{3})^{2n} +(1-\sqrt{3})^{2n} }{2^{n+1}}; \; q_{2n-1} = \frac{(1+\sqrt{3})^{2n} +(1-\sqrt{3})^{2n} }{2^{n+1}\sqrt{3}}, \\
& p_{2n} = \frac{(1+\sqrt{3})^{2n+1} +(1-\sqrt{3})^{2n+1} }{2^{n+1}}; \; q_{2n} = \frac{(1+\sqrt{3})^{2n+1} +(1-\sqrt{3})^{2n+1} }{2^{n+1}\sqrt{3}}.
\end{align*}
Convergents of $\displaystyle \frac{p'_k}{q'_k}$ of $1+\sqrt{3}$ can be obtained by adding 1 to convergents of $\sqrt{3}$.
By using the relations \eqref{3a} and \eqref{3b}, we can prove by induction:
\begin{lemma}\label{lemma2}
\begin{equation}
\begin{bmatrix}
3 & 2\\
1 & 1
\end{bmatrix}^k
=
\begin{bmatrix}
q_{2k} & 2q_{2k-1}\\
q_{2k-1} & q_{2k-2}
\end{bmatrix}
=
\begin{bmatrix}
p'_{2k-1} & 2q'_{2k-1}\\
q'_{2k-1} & p'_{2k-3}
\end{bmatrix},
\qquad k\ge 1.
\label{eq:l2}
\end{equation}
\end{lemma}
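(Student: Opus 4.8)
The plan is to mirror the induction used for Lemma~\ref{lemma1}: I would establish the first matrix identity by induction on $k$ and then deduce the second from it by a direct substitution. For the base case $k=1$, I would observe that $q_0 = q_1 = 1$ and $q_2 = 3$, so that $\begin{bmatrix} q_2 & 2q_1 \\ q_1 & q_0\end{bmatrix} = \begin{bmatrix} 3 & 2 \\ 1 & 1\end{bmatrix}$, which is exactly $\begin{bmatrix} 3 & 2 \\ 1 & 1\end{bmatrix}^1$.

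For the inductive step, assume the first identity holds for some $k\ge 1$ and multiply on the right by the generating matrix:
\begin{equation*}
\begin{bmatrix} q_{2k} & 2q_{2k-1} \\ q_{2k-1} & q_{2k-2}\end{bmatrix}\begin{bmatrix} 3 & 2 \\ 1 & 1\end{bmatrix} = \begin{bmatrix} 3q_{2k} + 2q_{2k-1} & 2q_{2k} + 2q_{2k-1} \\ 3q_{2k-1} + q_{2k-2} & 2q_{2k-1} + q_{2k-2}\end{bmatrix}.
\end{equation*}
I would then identify the four entries with those of $\begin{bmatrix} q_{2k+2} & 2q_{2k+1} \\ q_{2k+1} & q_{2k}\end{bmatrix}$. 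The $(1,1)$ and $(2,1)$ entries are precisely the two relations in \eqref{3b}. For the $(2,2)$ entry I would use the even-index recurrence $q_{2k} = 2q_{2k-1} + q_{2k-2}$, obtained by equating the two expressions for $p_{2k-1}$ coming from the two halves of \eqref{3a}. For the $(1,2)$ entry I would combine this with \eqref{3b} to extract the odd-index relation $q_{2k+1} = q_{2k} + q_{2k-1}$ (substitute $q_{2k-2} = q_{2k} - 2q_{2k-1}$ into $q_{2k+1} = 3q_{2k-1} + q_{2k-2}$), which gives $2q_{2k} + 2q_{2k-1} = 2q_{2k+1}$. This closes the induction.

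For the second equality I would use the stated fact that the convergents of $1+\sqrt{3}$ are obtained by adding $1$ to those of $\sqrt{3}$, i.e.\ $p'_j = p_j + q_j$ and $q'_j = q_j$. Substituting these into the right-hand matrix and applying the first relation of \eqref{3a} (once with index $2k-1$ and once with index $2k-3$) reduces every entry to the corresponding entry of the middle matrix; in particular $p'_{2k-1} = p_{2k-1} + q_{2k-1} = q_{2k}$ and $p'_{2k-3} = p_{2k-3} + q_{2k-3} = q_{2k-2}$.

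I do not expect a genuine obstacle—the result is essentially a bookkeeping identity—but the step requiring the most care is the inductive step's off-diagonal and $(2,2)$ entries, since those are the ones not given verbatim by \eqref{3b} and must be extracted from \eqref{3a}. A secondary point to verify is the edge case $k=1$ of the second equality, where $p'_{2k-3} = p'_{-1}$ forces one to extend the $p',q'$ indexing via $p_{-1}=1,\ q_{-1}=0$, so that $p'_{-1} = 1 = q_0$ as required.
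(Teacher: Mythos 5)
Your proof is correct and takes exactly the route the paper intends: the paper offers no written-out argument, saying only that the lemma ``can be proved by induction using the relations \eqref{3a} and \eqref{3b},'' and your induction (with the even/odd-index recurrences $q_{2k}=2q_{2k-1}+q_{2k-2}$ and $q_{2k+1}=q_{2k}+q_{2k-1}$ extracted from those relations) is precisely that argument carried out in detail. Your handling of the second equality via $p'_j=p_j+q_j$, $q'_j=q_j$, and of the edge case $p'_{-1}=1=q_0$ at $k=1$, fills in details the paper leaves entirely implicit.
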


\subsubsection{Powers of a general matrix}
We take up a matrix to be used later.
\begin{lemma}\label{lemma3}
\begin{equation}
\begin{bmatrix}
2m+1 & 1 \\
1 & 0
\end{bmatrix}^{n}
=
\begin{bmatrix}
u_{n+1} & u_{n} \\
u_{n} & u_{n-1}
\end{bmatrix},
\end{equation}
where $u_n$ satisfies the recurrence relation:
\begin{equation}
u_{n+1}=(2m+1)~ u_{n} +u_{n-1}; \; \text{with} \; u_1=1, \, u_{0} =0. \label{recurrence}
\end{equation}
\end{lemma}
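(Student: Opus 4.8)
The plan is to prove the identity by induction on $n$, exactly as in the proofs of Lemma~\ref{lemma1} and Lemma~\ref{lemma2}. First I would establish the base case: for $n=1$ the claimed matrix on the right is $\begin{bmatrix} u_2 & u_1 \\ u_1 & u_0 \end{bmatrix}$, and since the recurrence \eqref{recurrence} gives $u_0=0$, $u_1=1$, and $u_2=(2m+1)u_1+u_0=2m+1$, this reduces to $\begin{bmatrix} 2m+1 & 1 \\ 1 & 0 \end{bmatrix}$, which is precisely the matrix being raised to the first power. (One may equally start from $n=0$, where the right-hand side is $\begin{bmatrix} u_1 & u_0 \\ u_0 & u_{-1} \end{bmatrix}=\begin{bmatrix} 1 & 0 \\ 0 & 1 \end{bmatrix}$ once $u_{-1}$ is read off from the recurrence, matching the identity matrix.)

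Next I would carry out the inductive step. Assuming the formula holds for exponent $n$, I would multiply on the right by the generating matrix:
\[
\begin{bmatrix}
u_{n+1} & u_{n} \\
u_{n} & u_{n-1}
\end{bmatrix}
\begin{bmatrix}
2m+1 & 1 \\
1 & 0
\end{bmatrix}
=
\begin{bmatrix}
(2m+1)u_{n+1}+u_{n} & u_{n+1} \\
(2m+1)u_{n}+u_{n-1} & u_{n}
\end{bmatrix}.
\]
Applying the recurrence \eqref{recurrence} to the two entries in the first column, namely $(2m+1)u_{n+1}+u_{n}=u_{n+2}$ and $(2m+1)u_{n}+u_{n-1}=u_{n+1}$, the product collapses to $\begin{bmatrix} u_{n+2} & u_{n+1} \\ u_{n+1} & u_{n} \end{bmatrix}$, which is exactly the asserted form with $n$ replaced by $n+1$. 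This closes the induction and establishes the identity for all $n$.

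There is genuinely no hard part here: the statement is a standard power-of-a-companion-matrix computation, and the symmetric Hankel structure of the right-hand side is self-perpetuating under multiplication precisely because the recurrence coefficient $2m+1$ appears in the top-left entry of the generating matrix. The only thing meriting care is bookkeeping of indices across the recurrence, so that the two first-column entries are matched to $u_{n+2}$ and $u_{n+1}$ rather than being shifted by one. I would present the base case and the single inductive display as above, noting that the off-diagonal symmetry $u_n=u_n$ is automatic and that consistency of the determinant, $u_{n+1}u_{n-1}-u_n^2=(-1)^n$, follows from $\det\begin{bmatrix} 2m+1 & 1 \\ 1 & 0 \end{bmatrix}=-1$ as an optional sanity check.
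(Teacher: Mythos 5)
Your proposal is correct and follows essentially the same route as the paper's own proof: induction on $n$, right-multiplication of the inductive matrix by $\begin{bmatrix} 2m+1 & 1 \\ 1 & 0 \end{bmatrix}$, and application of the recurrence \eqref{recurrence} to the first-column entries. Your added remarks (the $n=0$ base case and the determinant identity $u_{n+1}u_{n-1}-u_n^2=(-1)^n$) are harmless extras beyond what the paper does.
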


\begin{proof}
The statement is obviously true for $n = 1$ in view of \eqref{recurrence}.  Assume the statement to be true for $n=m$ so that
\[
\begin{bmatrix}
2m+1 & 1  \\
1 & 0
\end{bmatrix}^m
=
\begin{bmatrix}
u_{m+1} & u_{m}  \\
u_{m} & u_{m-1}
\end{bmatrix}.
\]
Then
\begin{align*}
\begin{bmatrix}
2m+1 & 1  \\
1 & 0
\end{bmatrix}^{m+1}
&=
\begin{bmatrix}
u_{m+1} & u_{m}  \\
u_{m} & u_{m-1}
\end{bmatrix}
\begin{bmatrix}
2m+1 & 1  \\
1 & 0
\end{bmatrix}\\
& =
\begin{bmatrix}
(2m+1) u_{m+1} +u_{m} &  u_{m+1}  \\
(2m+1) u_{m} +u_{m-1} &  u_{m}
\end{bmatrix} \\
& =
\begin{bmatrix}
u_{m+2} & u_{m+1}  \\
u_{m+1} & u_{m}
\end{bmatrix}
\qquad \text{by using \eqref{recurrence}}.
\end{align*}
So the statement is true for $n=m+1$ also. Hence true for all $n$.
\end{proof}
\subsubsection{Proof using Chebyshev polynomials}
Let $M = \begin{bmatrix}
m_{11} & m_{12}\\
m_{21} & m_{22}
\end{bmatrix}$
be a unimodular matrix. Then \cite[pp.70--71]{bornwolf}
\begin{equation}
M^{n}
=\begin{bmatrix}
m_{11}~ U_{n-1}(a)-U_{n-2}(a) & m_{12}~ U_{n-1}(a)\\
m_{21}~ U_{n-1}(a) & m_{22}~ U_{n-1}(a) - U_{n-2}(a)
\end{bmatrix}
\end{equation}
where $\displaystyle a = \frac{m_{11} +m_{22}}{2}$ and $\displaystyle U_n$ are the \emph{Chebyshev polynomials of the second kind} whose generating function is: $\displaystyle g(x, t) = \frac{1}{1-2xt+t^2} = \sum_{n=0}^{\infty} U_{n} (x)~t^n$ with $|t|<1, \, |x|\le 1.$ We have $U_{n} (x)$  \cite[p.229, (5.98)]{andrews}:
\[
U_{n} (x) = \sum_{k=0}^{[\frac{n}{2}]} (-1)^k \binom{n-k}{k}(2x)^{n-2k}.
\]
A first few polynomials are given below.
\begin{align*}
& U_{0} (x) =1, \; U_{1} (x) =2x, \; U_{2} (x) =(2x)^2 -1, \; U_{3} (x) =(2x)^3 -2(2x), \\
& U_{4} (x) =(2x)^4 -3(2x)^2 +1, \;  U_{5} (x) =(2x)^5 -4 (2x)^3 +3(2x).
\end{align*}
They satisfy the recurrence relation: $U_{n+1} (x) = 2x~ U_{n} (x) -U_{n-1} (x).$  \cite[p.230, (5.100)]{andrews} Note that $a=2x, \, b=-1$ in the recurrence.

Careful examination of the polynomial with all $[\frac{n}{2}]+1$ positive terms (in descending order of the powers of $x$) generated by
\begin{equation}
U^\prime_{n} (x):=\frac{\left(x +\sqrt{1+x^2}\right)^{n+1} -\left(x -\sqrt{1+x^2}\right)^{n+1}}{2 ~ \sqrt{1+x^2}}, \; n=0, 1, 2, \dots \label{binetlike}
\end{equation}
shows that the introduction of the sign scheme ($+ ~-$) in $U^\prime_{n} (x)$ transforms it into $U_{n} (x)$. $U^\prime_{n} (x)$ comes from $t^2 \rightarrow -t^2$ in the generating function.

With $m_{11} =2m+1, \, m_{22}=0 \Rightarrow \displaystyle a=\frac{2m+1}{2}$,  we get:
\[
\begin{bmatrix}
2m+1 & 1\\
1 & 0
\end{bmatrix}^n
=
\begin{bmatrix}
(2m+1) ~ U_{n-1}(\frac{2m+1}{2}) -U_{n-2}(\frac{2m+1}{2}) & U_{n-1}(\frac{2m+1}{2})\\
U_{n-1}(\frac{2m+1}{2}) & -U_{n-2}(\frac{2m+1}{2})
\end{bmatrix}
\]
and on using the recurrence relation for these polynomials:
\begin{equation}
\begin{bmatrix}
2m+1 & 1\\
1 & 0
\end{bmatrix}^n
=
\begin{bmatrix}
U_{n}(\frac{2m+1}{2}) & U_{n-1}(\frac{2m+1}{2})\\
U_{n-1}(\frac{2m+1}{2}) & -U_{n-2}(\frac{2m+1}{2})
\end{bmatrix}
=
\begin{bmatrix}
u_{n+1} & u_{n}\\
u_{n} & u_{n-1}
\end{bmatrix}
\end{equation}
changing the subscript from $n$ to $n+1$ as $U_0 =u_1=1$. $u_n$ is obtained by setting $x=\frac{2m+1}{2}$ in \eqref{binetlike} and was given earlier in \eqref{power}.

\section{Formulas without a central term or the central term $< a_0$}

\subsection{Formulas without central term or central term $< a_0 -1$}

\subsubsection{Formulas with $\ell (d)=1$}

The only formula was noted by Euler:
\begin{equation}
\sqrt{n^2 +1}=[n; \overline{2n}]; \quad n\ge 1.
\end{equation}

\subsubsection{Formulas with $\ell (d)=2$}

These  two formulas can be easily established.
\begin{equation}
\sqrt{(mn)^2 +n}=[mn; \overline{2m, 2mn}]; \quad m, n \ge 1.
\end{equation}
\begin{equation}
\sqrt{(mn)^2 +2n}=[mn; \overline{m, 2mn}]; \quad m, n \ge 1. \label{amrik1}
\end{equation}
It can be shown without difficulty that these are the only possibilities.

\subsubsection{Formula with $\ell (d)=3$}
For the most general case with period 3, we have $\sqrt {d} = [a; b, b, 2a, b, b,2a, \dots].$ Then $\sqrt{d} =a+y$ with $y =[0;b, b, 2a,b, b, 2a, \dots].$ So we get
\[
y = \cfrac{1}{b+ \cfrac{1} {b+ \cfrac{1}{2a +y}}} =\frac{2ab+by +1}{2ab^2 +2a +by^2 +b+y}
\]
or
\[
y^2+2a y-\frac{2ab+1}{b^2+1}=0.
\]
Solving the resultant quadratic equation, we get the positive root: $y \displaystyle =-a +\sqrt{a^2 +\frac{2ab+1}{b^2 +1}}$. Combining with the leading $a$, we obtain $d \displaystyle = a^2 + \frac{2ab+1}{b^2+1}$.  For $d$ to be an integer, $b^2+1$ must divide $2ab+1$. Let $\displaystyle \frac{2ab+1}{b^2+1} =k.$ \emph{WolframAlpha} gives its general solution over the integers:
\[
b=2c_1, ~ a=4c_2c_1^2+c_1+c_2, ~  k=4c_1c_2 +1; \quad c_1, c_2 \in \mathbb{Z}.
\]
This leads, on setting $c_1=m, ~c_2=n,$ to Perron's formula \cite[p.100]{perron}:
\begin{align}
& \sqrt{\lbrace (4m^2+1) n +m \rbrace^2 +4mn +1}=[(4m^2+1)n +m; \notag \\
& \overline{2m, 2m, 2\lbrace (4m^2+1) n +m \rbrace}].
\end{align}

\subsubsection{Formula with $\ell (d)=5$}

This formula will be used in a later section:
\begin{equation}
\sqrt{(2n+1)^2 +4} =[2n+1; ~ \overline{n, 1, 1, n, 4n+2}]. \label{amrik5a}
\end{equation}
The same procedure applied to $y =[0; \frac{a-1}{2}, 1, 1,  \frac{a-1}{2}, 2a+y$ gives equation $y^2 +2ay -4=0$ which yields $y=-a \pm \sqrt{a^2 +4}$ establishing the formula.
\begin{proof} We prove it by the continued fraction algorithm outlined above.
\[
\sqrt{(2n+1)^2 +4} =  (2n +1) + \frac{1}{\displaystyle \frac{ \sqrt {(2n+1)^2 +4} +(2n+1) }{4}}; \;  a_0=2n+1.
\]
Next
\[
\frac{ \sqrt {(2n+1)^2 +4} +(2n+1) }{4} = n +  \frac{ \sqrt {(2n+1)^2 +4} -(2n-1) }{4}.
\]
So $a_1=n$ and the expression on the extreme right becomes
\[
 \frac{(2n+1)^2 +4 -(2n-1)^2 }{4[ \sqrt {(2n+1)^2 +4} +(2n-1) ]}
 = \frac{2n+1}{\sqrt {(2n+1)^2 +4} +(2n-1)}
\]
which on being inverted becomes
\[
 \frac{\sqrt {(2n+1)^2 +4} +(2n-1)} {2n+1} = 1 +  \frac{\sqrt {(2n+1)^2 +4} -2} {2n+1}.
\]
So $a_2=1$ and the expression on the extreme right becomes
\[
 \frac{(2n+1)^2 } {(2n+1) [\sqrt {(2n+1)^2 +4} +2)] } = \frac{2n+1}{\sqrt {(2n+1)^2 +4} +2}
\]
which on being inverted becomes
\[
 \frac{\sqrt {(2n+1)^2 +4} +2 +2n -2n} {2n+1} = 1 +  \frac{\sqrt {(2n+1)^2 +4} -(2n-1)} {2n+1}.
\]
So $a_3 =1$ and the expression on the extreme right becomes
\[
\frac{4}{\sqrt {(2n+1)^2 +4} +(2n-1)}
\]
which on being inverted becomes
\[
 \frac{\sqrt {(2n+1)^2 +4} +(2n-1)} {4} = n +  \frac{\sqrt {(2n+1)^2 +4} -(2n+1)} {4}.
\]
So $a_4=n$ and the expression on the extreme right becomes
\[
\frac{1}{\sqrt {(2n+1)^2 +4} +(2n+1)} = \frac{1}{(4n+2) +\sqrt {(2n+1)^2 +4} -(2n+1)}.
\]
Thus $a_5 =4n+2$ and the loop starts again.
\end{proof}

\begin{proposition}
Let $1 \le a_n < a_0 -1$ be a central term of the period of the scf of $\sqrt{d} = \sqrt{a^2+b}$. Then
\begin{align*}
a_n \equiv 1 \pmod 2 \Longleftrightarrow a \equiv b  \equiv 0 \pmod 2, \\
a_n \equiv 0 \pmod 2 \Longleftrightarrow a \equiv b  \equiv 1 \pmod 2.
\end{align*}
\end{proposition}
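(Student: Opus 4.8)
The plan is to run the whole analysis through the Rippon--Taylor symmetric matrix. Writing $A_k = \begin{bmatrix} a_k & 1 \\ 1 & 0\end{bmatrix}$ and using the correspondence \eqref{R1}, I would attach to the even period $\ell = 2n$ the matrix $W := A_1 A_2 \cdots A_{\ell-1} = \begin{bmatrix} A & B \\ B & C\end{bmatrix}$, which is symmetric because the block $a_1, a_2, \dots, a_{\ell-1}$ is a palindrome, and which satisfies $\det W = AC - B^2 = (-1)^{\ell-1} = -1$ since $\ell$ is even. The two facts to exploit are this determinant relation and the Rippon--Taylor identity \eqref{rip}, namely $bA = 2a_0 B + C$ with $a = a_0$. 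It helps to note first that $d = a_0^2 + b$ gives $d \equiv a_0 + b \pmod 2$, so the two listed cases are exactly the case $d$ even, split by the parity of $a_0$; the assertion then amounts to showing $a_n \not\equiv a_0 \pmod 2$ whenever $d$ is even. So the strategy is: determine the parities of $A,B,C$ from the parity of the central term $a_n$, then feed them into the two relations above to pin down the parities of $a_0$ and $b$.

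The key structural step is to factor $W$ through its centre. By the palindromic symmetry $a_{n+j} = a_{n-j}$ and the symmetry of each $A_k$, one has $A_{n+1}\cdots A_{\ell-1} = (A_1\cdots A_{n-1})^{\top}$, so that $W = L\,A_n\,L^{\top}$ with $L := A_1\cdots A_{n-1}$ (the empty product, i.e.\ the identity, when $n=1$). Reducing modulo $2$, the quadratic form $v \mapsto v^{\top} W v$ over $\mathbb{F}_2$ loses its off-diagonal term and equals $A v_1 + C v_2$; since congruence gives $v^{\top} W v = a_n\,(L^{\top}v)_1$, this form vanishes identically precisely when $a_n$ is even. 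Because $L$ is invertible over $\mathbb{F}_2$, this yields the clean criterion: $a_n$ is even $\iff A \equiv C \equiv 0 \pmod 2$, whereupon $AC - B^2 \equiv 1 \pmod 2$ forces $B$ odd; and when $a_n$ is odd, at least one of $A, C$ is odd.

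It remains to transport this to $a_0$ and $b$ through \eqref{rip}. When $a_n$ is odd the argument is quick: with $A$ or $C$ odd, the reduction $bA \equiv C \pmod 2$ reads off the parity of $b$ (and, in fact, rules out the parity patterns inconsistent with the biconditional, e.g.\ $A$ even with $C$ odd, which would force $0\equiv1$). The genuinely delicate case is $a_n$ even: there $A \equiv C \equiv 0 \pmod 2$ collapses $bA = 2a_0B + C$ to the vacuous $0 \equiv 0 \pmod 2$, so parities alone say nothing about $b$. To recover it I would pass to finer $2$-adic information, writing $A = 2\alpha,\ C = 2\gamma$ and using $\det W = -1$ in the form $B^2 = 4\alpha\gamma + 1$ together with the central continued-fraction identities $a_n Q = 2P$ and $d = P^2 + QQ'$ (these come from $a_kQ_k = P_k + P_{k+1}$ and $Q_kQ_{k-1} = d - P_k^2$ at the symmetry point, where $P_{k+1}=P_k$). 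Equivalently, the parity of $a_n$ is governed by comparing the $2$-adic valuations $v_2(P)$ and $v_2(Q)$, while $d = a^2 + b$ fixes $d$, hence $a+b$, modulo $2$.

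I expect this last step to be the main obstacle, and it is precisely where the hypothesis $a_n < a_0 - 1$ must enter. A short argument shows that $Q = 1$ occurs only at a period boundary and that $Q = 2$ forces $a_n = P$ with $\sqrt d \in (P, P+2)$, hence $a_0 \le a_n + 1$, i.e.\ $a_n \ge a_0 - 1$; so the hypothesis excludes the degenerate centres $Q \in \{1,2\}$ and guarantees $Q \ge 3$. The crux of the proof is then to show that, under $a_n < a_0 - 1$, the higher $2$-adic data of the pair $(A,C)$ — equivalently of $(v_2(P), v_2(Q))$ — are driven into the single configuration compatible with the two stated biconditionals; controlling that alignment (rather than the clean $\bmod 2$ characterization of the previous paragraph) is where the real work lies, and the rest of the bookkeeping is routine.
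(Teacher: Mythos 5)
The paper itself offers no proof of this proposition (it is stated bare), so the only question is whether your argument stands on its own --- and it does not, although parts of it are genuinely correct. The symmetric factorization $W = L\,A_n\,L^{\top}$ and the resulting $\mathbb{F}_2$ criterion ($a_n$ even $\Longleftrightarrow A \equiv C \equiv 0 \pmod 2$, with $B$ then odd from $\det W=-1$) are right. But the transport to $(a,b)$ fails exactly where you hand-wave. Since \eqref{rip} reduces mod $2$ to $bA \equiv C$, the term $2a_0B$ disappears, so no argument along these lines can ever determine the parity of $a = a_0$, which both biconditionals require. In the odd case, your parenthetical claim that the reduction ``rules out the parity patterns inconsistent with the biconditional'' is wrong: it excludes only the pattern ($A$ even, $C$ odd); the pattern $A, C$ both odd --- which forces $b$ odd, flatly contradicting the assertion that $a_n$ odd implies $b$ even --- survives. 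And the even case is, by your own account, deferred (``where the real work lies''), so the proof is incomplete even on its own terms.

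The deeper problem is that this gap cannot be closed, because the proposition is false. For $d = 32 = 5^2 + 7$ we have $\sqrt{32} = [5;\, \overline{1,1,1,10}]$, period $4$, central term $a_2 = 1$ with $1 \le 1 < a_0 - 1 = 4$, yet $a = 5$ and $b = 7$ are both odd; here $W = \begin{bmatrix} 3 & 2 \\ 2 & 1 \end{bmatrix}$, precisely the surviving pattern above. For $d = 21 = 4^2 + 5$ we have $\sqrt{21} = [4;\, \overline{1,1,2,1,1,8}]$, central term $2 < 3$, with $a$ even and $b$ odd --- so $d$ need not even be even, which also invalidates your opening reduction of the statement to ``$a_n \not\equiv a_0 \pmod 2$ whenever $d$ is even'' (a claim that fails independently: $d = 32$ is even with $a_n \equiv a_0 \equiv 1 \pmod 2$, and $d=20$ gives $[4;\,\overline{2,8}]$ with $a_n \equiv a_0 \equiv 0 \pmod 2$). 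Indeed the paper's own formula \eqref{amrik1} with $m = 4$, $n = 2$ supplies a further counterexample: $\sqrt{68} = [8;\, \overline{4, 16}]$, central term $4 < 7$, with $a = 8$, $b = 4$ both even and $a_n$ even. So the correct outcome here is a disproof, not a proof; ironically, your mod-2 lemma is a perfectly good instrument for locating exactly such counterexamples.
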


\subsubsection{Formula with $\ell (d)=6$}

This formula is provable by the algorithm:
\begin{equation}
\sqrt{(2n+2)^2 +(4n+1)} =[2n+2; ~ \overline{1, n, 2, n, 1, 4n+4}]; \quad n \in \mathbb{N}. \label{amrik6a}
\end{equation}

We now list formulas with particular values for $n=0, 1, 2, \dots$ (except the second which holds for $n\ge 1$):
\begin{align*}
\sqrt{(45n+14)^2 +(38n+12)} &=[45n+14; ~ \overline{2, 2, {\bf 1}, 2, 2, 2(45n+14)}], \\
\sqrt{(35n+1)^2 +(29n +1)} &=\left[35n+1; \overline{2, 2, 2, {\bf 2}, 2, 2(35n+1)} \right], \\
\sqrt{(95 n + 68)^2 +(78 n + 56)} &=[95 n + 68; ~ \overline{2, 2, {\bf 3}, 2, 2, 2(95 n + 68)}], \\
\sqrt{(60 n + 50 )^2 +(49 n + 41)} &=[60 n + 50; ~ \overline{2, 2, {\bf 4}, 2, 2, 2(60 n + 50)}], \\
\sqrt{(145 n + 17 )^2 +( 2 (59 n + 7)) } &=[145 n + 17; ~ \overline{2, 2, {\bf 5}, 2, 2, 2(145 n + 17)}], \\
\sqrt{(85 n + 54)^2 + ( 69 n + 44) } &=[85 n + 54; ~ \overline{2, 2, {\bf 6}, 2, 2, 2(85 n + 54)}], \\
\sqrt{(195 n + 101)^2 + (158 n + 82)} &=[195 n + 101; ~ \overline{2, 2, {\bf 7}, 2, 2, 2(195 n + 101)}], \\
\sqrt{(110 n + 48)^2 + (89 n + 39)} &=[110 n + 48; ~ \overline{2, 2, {\bf 8}, 2, 2, 2(110 n + 48)}].
\end{align*}

When the middle term is odd, the coefficient of $n$ in square quantity goes up by 50 and that of $n$ in linear quantity rises by 40 when we step up to the next odd number. If the middle term is even, the first term increases by 25 and the second by 20 while going to the next even number.

\subsubsection{Formulas with $\ell (d)=7$}
We have for $n=0, 1, 2, \dots,$ these formulae
\begin{align*}
\sqrt{(13n+7)^2 +(16n+8)} &=[13n+7; ~ \overline{1, 1, 1, 1, 1, 1, 2(13n+7)}], \\
\sqrt{(29n+27)^2 +(34n+32)} &=[29n+27; ~ \overline{1, 1, 2, 2, 1, 1, 2(29n+27)}], \\
\sqrt{(53 n + 43)^2 +(60 n + 49)} &=[53 n + 43; ~ \overline{1, 1, 3, 3, 1, 1, 2(53 n + 43)}], \\
\sqrt{(85 n + 16)^2 +(94 n + 18)} &=[85 n + 16; ~ \overline{1, 1, 4, 4, 1, 1, 2(85 n + 16)}], \\
\sqrt{(125 n + 8)^2 +(136 n + 9)} &=[125 n + 8; ~ \overline{1, 1, 5, 5, 1, 1, 2(125 n + 8)}].
\end{align*}
We see that the coefficient of the $n$ term in the square part equals $(2m+1)^2 +4$, if we denote the two identical middle terms by $m$, and the coefficient of $n$ in the linear term is $(2m+1)^2 +2m+5.$ However, the constant terms do not exhibit any pattern.

The following formulas form another group.
\begin{align*}
\sqrt{(25n+17)^2 +(36n+25)} &=[25n+17; ~ \overline{1, 2, 1, 1, 2, 1, 2(25n+17)}],\\
\sqrt{(109 n + 52)^2 +(152 n + 73)} &=[109 n + 52; ~ \overline{1, 2, 3, 3, 2, 1, 2(109 n + 52)}], \\
\sqrt{\lbrace5(53 n +49)\rbrace^2 +(364 n + 337)} &=[5(53 n +49); ~ \overline{1, 2, 5, 5, 2, 1, 10(53 n +49)}],\\
\sqrt{(493 n + 18)^2 +( 672 n + 25)} &=[493 n + 18; ~ \overline{1, 2, 7, 7, 2, 1, 2(493 n + 18)}].
\end{align*}
\textit{No formula exists with pattern}: $1, 2, 2m, 2m, 2, 1$.

\subsubsection{Formula with $\ell (d)=8$}
This formula with three general terms can be proved by the algorithm:
\begin{equation}
\sqrt{(4n+5)^2 +(8n+3)} =[4n+5; ~ \overline{1, n, 2, 2n+2, 2, n, 1, 2(4n+5)}]; \; n \in \mathbb{N}. \label{amrik8}
\end{equation}

\subsubsection{Formula with $\ell (d)=9$}

The following formulas with particular values are valid for $n=0, 1, 2, \dots$
\begin{align*}
\sqrt{(73n+10)^2 +(92n+13)} &=[73n+10; ~ \overline{1, 1, 1, 2, 2, 1, 1, 1, 2(73n+10)}], \\
\sqrt{(509 n + 321)^2 + (602 n + 380)} &=[509 n + 321; ~ \overline{1, 1, 2, 4, 4, 2, 1, 1, 2(509 n + 321)}], \\
\sqrt{(1985 n + 254)^2 + ( 2256 n + 289)} &=[1985 n + 254; ~ \overline{1, 1, 3, 6, 6, 3, 1, 1, 2(1985 n + 254)}].
\end{align*}
We also have:
\[
\sqrt{(1949 n + 201)^2 +(720 n + 281)} =[1949 n + 201; ~ \overline{1, 2, 3, 4, 4, 3, 2, 1, 2(1949 n + 201)}].
\]

\subsubsection{Formula with $\ell (d)=10$}
We have ($n=0, 1, 2, \dots$) this nice pattern:
\begin{align*}
& \sqrt{\lbrace5 (2021 n + 965)\rbrace^2 +(14102 n + 6734)} \\
&=[5 (2021 n + 965); ~ \overline{1, 2, 3, 4, 5, 4, 3, 2, 1, 10(2021 n + 965)}].
\end{align*}

\subsection{Formulas with central term $=a_{0} -1$}

\subsubsection{Formula with $\ell (d)=4$}

It is easy to establish by the algorithm that \cite[p.110]{perron}:
\begin{equation}
\sqrt{n^2 +(2n-1)}=[n; \overline{1, n-1, 1, 2n}], n\ge 2.
\end{equation}

For period length $\ell(d) >4,$ we have
\begin{proposition}
Let $a_n =a_0-1$ be the central term of the period of the scf of $\sqrt{d} = \sqrt{a^2+b}$. Then $a \equiv 1 \pmod 2  \Longleftrightarrow b \equiv 1 \pmod 4$ and $a \equiv 0 \pmod 2  \Longleftrightarrow b \equiv 3 \pmod 4$.
\end{proposition}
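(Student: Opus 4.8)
The plan is to locate the central complete quotient and force its denominator to equal $2$, after which a single congruence does all the work. Since the period has a \emph{unique} central partial quotient, its length is even, say $\ell(d)=2n$, and the palindromic symmetry described above gives $P_n=P_{n+1}$; hence $a_nQ_n=P_n+P_{n+1}=2P_n$, so from $a_n=a_0-1=a-1$ I obtain the exact relation $(a-1)Q_n=2P_n$. The central complete quotient is $\alpha_n=\dfrac{P_n+\sqrt d}{Q_n}$, and substituting $P_n=\tfrac12(a-1)Q_n$ gives $\alpha_n=\dfrac{a-1}{2}+\dfrac{\sqrt d}{Q_n}$.

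First I would pin down $Q_n$. Because $\lfloor\alpha_n\rfloor=a-1$, the inequality $a-1\le\alpha_n<a$ reduces to $\tfrac{a-1}{2}\le\tfrac{\sqrt d}{Q_n}<\tfrac{a+1}{2}$, i.e. $\dfrac{2\sqrt d}{a+1}<Q_n\le\dfrac{2\sqrt d}{a-1}$. Using $a<\sqrt d<a+1$ (valid since $d$ is not a square and $a=a_0=\lfloor\sqrt d\rfloor$), the lower bound exceeds $1$ while the upper bound is $<2+\tfrac{4}{a-1}$, which is $<3$ as soon as $a\ge 6$; thus $Q_n=2$ for every $a\ge 6$. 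The finitely many cases $a\le 5$ I would dispatch by direct inspection of the $d<36$ having even period $>4$, where the only instance satisfying the hypothesis, $d=19$, again has $Q_n=2$. Establishing $Q_n=2$ is the main obstacle: everything downstream is congruence bookkeeping, but this size estimate is what excludes larger denominators and forces the clean value.

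With $Q_n=2$ I have $P_n=a-1$, and $Q_nQ_{n-1}=d-P_n^2$ becomes $2Q_{n-1}=d-(a-1)^2=b+2a-1$; since $Q_{n-1}\in\mathbb{Z}$ this already shows that $b$ is \emph{odd}. To read off $b$ modulo $4$ I would invoke the half-period identity $p_{k}^2-d\,q_{k}^2=(-1)^{k+1}Q_{k+1}$, which follows by substituting $\alpha_{k+1}=\tfrac{P_{k+1}+\sqrt d}{Q_{k+1}}$ into Davenport's formula \eqref{daven} and comparing the coefficients of $\sqrt d$ (using $p_kq_{k-1}-p_{k-1}q_k=(-1)^{k-1}$). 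Taking $k=n-1$ yields $p_{n-1}^2-d\,q_{n-1}^2=(-1)^nQ_n=\pm 2$.

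The finish is then a mod-$4$ analysis of $p_{n-1}^2-d\,q_{n-1}^2=\pm2$. If $q_{n-1}$ were even the left side would be $\equiv p_{n-1}^2\equiv 0,1\pmod4$, never $\pm2$; so $q_{n-1}$ is odd, $q_{n-1}^2\equiv1\pmod4$, and $p_{n-1}^2-d\equiv\pm2\pmod4$. Since $p_{n-1}^2\in\{0,1\}\pmod4$, this forces $d\equiv2$ or $d\equiv3\pmod 4$. Combining with $d=a^2+b$ and the fact that $b$ is odd: the value $d\equiv2$ is incompatible with $a$ even (which would need $b\equiv2$), so it yields $a$ odd together with $b\equiv1\pmod4$; likewise $d\equiv3$ is incompatible with $a$ odd and so yields $a$ even together with $b\equiv3\pmod4$. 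These two mutually exclusive alternatives are precisely the two biconditionals asserted, which completes the argument.
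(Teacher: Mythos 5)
Your argument is correct, but there is nothing in the paper to measure it against: the paper states this proposition bare, between lists of formulas, offering no proof at all (only the remark that it implies $b$ is odd). So rather than paralleling or diverging from the paper's proof, you have supplied one. Your skeleton is sound and complete modulo standard facts about the expansion of $\sqrt{d}$: a unique central term forces $\ell=2n$; the central symmetry $P_{n+1}=P_n$ combined with the paper's relation $a_nQ_n=P_n+P_{n+1}$ gives $(a-1)Q_n=2P_n$; the floor inequality $a-1\le\alpha_n<a$ then forces $Q_n=2$ once $a\ge 6$, and $d=19$ is indeed the only case with $a\le 5$, even period greater than $4$, and central term $a_0-1$; the relation $Q_nQ_{n-1}=d-P_n^2$ makes $b$ odd; and the identity $p_{k}^2-dq_{k}^2=(-1)^{k+1}Q_{k+1}$ (which does follow from \eqref{daven} plus the determinant relation, as you sketch) gives $p_{n-1}^2-dq_{n-1}^2=\pm2$, whence $d\equiv 2$ or $3\pmod 4$, and the two biconditionals drop out of your parity bookkeeping. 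Two caveats are worth fixing in a final write-up. First, $P_{n+1}=P_n$ is a symmetry of the complete quotients, not literally of the partial-quotient palindrome that the paper describes; it is classical (Perron; Rockett--Sz\"{u}sz) but should be cited or derived, since it is the hinge of the whole computation. Second, your finite inspection is scoped to period $>4$, which matches the paper's framing of the proposition (it is prefaced there by the clause on $\ell(d)>4$); if one reads the statement without that restriction, the period-$4$ instances with $a\le5$, namely $d=7,14,23,34$, must also be checked --- each has $Q_n=2$ and satisfies the congruences, so the result and your method survive unchanged.
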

It implies that $b$ is always odd.

\subsubsection{Formula with $\ell (d)=10$}

Kraitchik \cite[p.49]{kraitchik} gives for $n=0, 1, 2, \dots$
\begin{equation*}
\sqrt{(9n+6)^2 +(10n+7)} =[9n+6; ~ \overline{1, 1, 3, 1, 9n+5, 1, 3, 1, 1, 18n+12}].
\end{equation*}
We have:
\begin{equation}
\sqrt{(11 n + 9)^2 +(6n+5)} =[11 n + 9; ~ \overline{3, 1, 1, 1, 11 n + 8, 1, 1, 1, 3, 2(11 n + 9)}].
\end{equation}

\begin{equation}
\sqrt{(27 n + 8)^2 +(10 n + 3)} =[27 n + 8; ~ \overline{5, 2, 1, 1, 27 n + 7, 1, 1, 2, 5, 2(27 n + 8)}].
\end{equation}

\subsubsection{Formula with $\ell (d)=12$}
I discovered, for $n=0, 1, 2, \dots$, this formula:
\begin{align}
& \sqrt{(47n +10)\rbrace^2 +(14n +3)} \notag\\
&=[47n +10; ~ \overline{6, 1, 2, 1, 1, 47n+9, 1, 1, 2, 1, 6, 2(47n +10)}].
\end{align}

\subsubsection{Formula with $\ell (d)=14$}
I also discovered, for $n=0, 1, 2, \dots$,
\begin{align}
& \sqrt{  (33 n + 11)^2 +(38 n + 13 )  } \notag\\
&=[(33 n + 11; ~ \overline{1, 1, 2, 1, 3, 1, (33 n+10, 1, 3, 1, 2, 1, 1, 2((33 n + 11)}].
\end{align}

\subsubsection{Formula with $\ell (d)=16$}
Further, I discovered, for $n=0, 1, 2, \dots$,
\begin{align}
& \sqrt{ (151 n + 9)^2 + (210 n + 13) } \notag\\
&=[151 n + 9; ~ \overline{1, 2, 3, 1, 1, 5, 1, 151n +8, 1, 5, 1, 1, 3, 2, 1 2(151 n + 9)}].
\end{align}

\subsubsection{Formula with $\ell (d)=18$}
Furthermore, we have for $n=0, 1, 2, \dots$,
\begin{align}
& \sqrt{ (627 n + 12)^2 +(962 n + 19) } \notag\\
&=[627 n + 12; ~ \overline{1, 3, 3, 2, 1, 1, 7, 1, 627n +11, 1, 7, 1, 1, 2, 3, 3, 1, 2(627 n + 12)}].
\end{align}

\subsubsection{Formula with $\ell (d)=20$}
Furthermore, we have for $n=0, 1, 2, \dots$,
\begin{align}
& \sqrt{(3383n +12)^2 + (1950n +7)} \notag\\
&=[3383n +12; ~ \overline{3, 2, 7, 1, 3, 4, 1, 1, 1, 3383n +11, 1, 1, 1, 4, 3, 1, 7, 2, 3, 2(3383n +12)}],\\
& \sqrt{(9041n +14)^2 + (1930 n + 3) } \notag\\
&=[9041n +14; ~ \overline{9, 2, 1, 2, 2, 5, 4, 1, 1, 9041n +13, 1, 1, 4, 5, 2, 2, 1, 2, 9, 2(9041n +14)}].
\end{align}

\subsection{Formula with repeated 2's}

Perron gives this formula \cite[p.114]{perron}:
\begin{equation*}
\sqrt{(3n+1)^2 +(2n+1)} =[3n+1; ~ \overline{2, 1, 3n, 1, 2, 6n+2}].
\end{equation*}
It can be verified that no formula exists if 1 is replaced with any other number in the pattern. Kraitchik \cite[p.47]{kraitchik} gives this formula:
\begin{equation*}
\sqrt{(7n+1)^2 +(6n+1)} =[7n+1; ~ \overline{2, 2, 1, 7n, 1, 2, 2, 14n+2}].
\end{equation*}
I found the continuation of the previous two formulas:
\begin{equation*}
\sqrt{(17n+1)^2 +(14+1)} =[34n+1; ~ \overline{2, 2, 2, 1, 17n, 1, 2, 2, 2, 34n+2}],
\end{equation*}
and
\begin{align*}
& \sqrt{(41n+1)^2 +(34n+1)} =\left [41n+1; \right. \notag \\
& \left. \overline{2, 2, 2, 2, 1, 41n, 1, 2, 2, 2, 2, 82n+2} \right],
\end{align*}
both of which can be easily proved by means of the algorithm.  In fact, these four formulas are special cases of the theorem:

\begin{theorem}
Let $p_{k}$ is the numerator in the $k$-th convergent of $\sqrt{2}$. Then
\begin{align*}
& \sqrt{(p_{k+1}~ n +1)^2 +(2 p_{k} ~n +1)} = \left[(p_{k} n +1); \right. \\
& \left. \overline{2~ \text{repeated} ~k ~\text{times}, 1, (p_{k+1} n), 1, 2~ \text{repeated} ~k~ \text{times},  2(p_{k+1} n +1)} \right]. \notag
\label{amrikg}
\end{align*}
\end{theorem}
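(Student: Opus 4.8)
The plan is to use the matrix--convergent correspondence \eqref{R1} together with Rippon and Taylor's Lemma~1(a), i.e.\ equation \eqref{rip}. Taking the leading term to be $a_0=p_{k+1}n+1$ (as dictated by the square part $(p_{k+1}n+1)^2$ and by the four special cases), write the central term as $c:=p_{k+1}n$, so that $a_0=c+1$. The palindromic block $\underbrace{2,\dots,2}_{k},\,1,\,c,\,1,\,\underbrace{2,\dots,2}_{k}$ then corresponds to the symmetric product
\[
M=S^{k}\,N\,S^{k},\qquad S=\begin{bmatrix}2&1\\1&0\end{bmatrix},\quad N=\begin{bmatrix}1&1\\1&0\end{bmatrix}\begin{bmatrix}c&1\\1&0\end{bmatrix}\begin{bmatrix}1&1\\1&0\end{bmatrix}=\begin{bmatrix}c+2&c+1\\c+1&c\end{bmatrix}.
\]
By the same induction used for Lemma~\ref{lemma3} one has $S^{k}=\begin{bmatrix}q_{k+1}&q_{k}\\q_{k}&q_{k-1}\end{bmatrix}$, where $p_k,q_k$ are the convergents of $\sqrt2$. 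Once $M=\begin{bmatrix}A&B\\B&C\end{bmatrix}$ is known, \eqref{rip} reduces the theorem to the two identities $a=a_0=p_{k+1}n+1$ and $b=\dfrac{2a_0B+C}{A}=2p_kn+1$.

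The computational heart is to evaluate $M$ cleanly. I would write $N=S+c\,vv^{\top}$ with $v=\begin{bmatrix}1\\1\end{bmatrix}$, since $vv^{\top}=\begin{bmatrix}1&1\\1&1\end{bmatrix}$. This gives
\[
M=S^{2k+1}+c\,(S^{k}v)(S^{k}v)^{\top},
\]
and the decisive simplification is $S^{k}v=\begin{bmatrix}q_{k+1}+q_k\\q_k+q_{k-1}\end{bmatrix}=\begin{bmatrix}p_{k+1}\\p_k\end{bmatrix}$, since $q_{j+1}+q_j=(p_j+q_j)+q_j=p_j+2q_j=p_{j+1}$ by \eqref{root2a} and \eqref{root2b}. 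Reading off the entries with $S^{2k+1}=\begin{bmatrix}q_{2k+2}&q_{2k+1}\\q_{2k+1}&q_{2k}\end{bmatrix}$ and $c=p_{k+1}n$ yields the closed forms
\[
A=q_{2k+2}+p_{k+1}^{3}\,n,\qquad B=q_{2k+1}+p_{k+1}^{2}p_k\,n,\qquad C=q_{2k}+p_{k+1}p_k^{2}\,n.
\]

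Substituting these into \eqref{rip} and clearing the denominator turns the claim $b=2p_kn+1$ into the polynomial identity $2a_0B+C=(2p_kn+1)A$ in the variable $n$, which I would check coefficient by coefficient. The $n^{2}$ terms agree trivially (both equal $2p_{k+1}^{3}p_k$), and the constant terms reduce to the Pell recurrence $q_{2k+2}=2q_{2k+1}+q_{2k}$. The coefficient of $n$ is the only substantial work: using the index-doubling formulas $q_{2k+1}=p_{k+1}q_k+p_kq_{k+1}$ and $q_{2k+2}=2p_{k+1}q_{k+1}$ (immediate from the Binet forms listed for $\sqrt2$), and then substituting $p_{k+1}=p_k+2q_k$ and $q_{k+1}=p_k+q_k$ from \eqref{root2b} and \eqref{root2a}, the difference of the two sides factors through $p_{k+1}$ and collapses identically to $0$.

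The main obstacle is precisely this linear-coefficient identity: it is the single place where the specific arithmetic of $\sqrt2$ (the relations \eqref{root2a}, \eqref{root2b} and the doubling formulas) is genuinely used, and a bookkeeping slip there is the likely failure point. The only remaining thing to confirm is that the right-hand side is a bona fide scf for $n\ge1$: since $p_k<p_{k+1}$ we have $0<b=2p_kn+1<2a_0+1$, so $a_0=\lfloor\sqrt d\rfloor$ and the palindrome-plus-$2a_0$ shape is the correct one, while every displayed partial quotient is a positive integer. The boundary value $n=0$ is degenerate, as the central term becomes $0$; it is handled separately by the collapse $[\dots,1,0,1,\dots]=[\dots,2,\dots]$, which returns $\sqrt2$ (indeed $d=1+1=2$ for every $k$ when $n=0$).
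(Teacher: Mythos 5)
Your proposal is correct, and I checked its one substantial computation: with $A=q_{2k+2}+p_{k+1}^{3}n$, $B=q_{2k+1}+p_{k+1}^{2}p_{k}n$, $C=q_{2k}+p_{k+1}p_{k}^{2}n$ and $a_{0}=p_{k+1}n+1$, the identity $2a_{0}B+C=(2p_{k}n+1)A$ does hold; the linear coefficient reduces, via $q_{2k+1}=p_{k+1}q_{k}+p_{k}q_{k+1}$, $q_{2k+2}=2p_{k+1}q_{k+1}$ and \eqref{root2a}--\eqref{root2b}, to $p_{k+1}\left[(4q_{k}^{2}-2p_{k}^{2})+(2p_{k}^{2}-4q_{k}^{2})\right]=0$. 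Your argument shares the paper's skeleton --- the correspondence \eqref{R1} combined with Rippon--Taylor's formula \eqref{rip} --- but executes the general case differently, and in fact more completely than the paper does. The paper keeps $a=a_{0}$ symbolic (central term $a-1$), unwinds the purely periodic equation for $y=\sqrt{a^{2}+b}-a$ explicitly only for $k=1,2,3$ (obtaining $3y^{2}+6ay-(2a+1)=0$, etc.), and then asserts the general pattern $p_{k+1}y^{2}+2p_{k+1}ay-(2p_{k}a+p_{k-1})=0$, from which integrality of $b=(2p_{k}a+p_{k-1})/p_{k+1}$ yields $a=p_{k+1}n+1$, $b=2p_{k}n+1$. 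You instead fix the parametrization $a_{0}=p_{k+1}n+1$ from the outset and compute the palindrome matrix for \emph{every} $k$ in closed form, via the rank-one decomposition $N=S+c\,vv^{\top}$ and the identity $S^{k}v=(p_{k+1},p_{k})^{\top}$; this proves, rather than extrapolates from $k\le 3$, exactly the step the paper leaves unjustified. The trade-off: the paper's form $b=(2p_{k}a+p_{k-1})/p_{k+1}$ makes transparent why $a\equiv 1\pmod{p_{k+1}}$ is the unique residue class giving integer $d$ (since $p_{k+1}=2p_{k}+p_{k-1}$ and $\gcd(2p_{k},p_{k+1})=1$), i.e.\ why the family is essentially exhaustive, whereas your verification certifies only the stated parametrization --- which is all the theorem claims. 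Your closing observations (that $0<b\le 2a_{0}$ forces $a_{0}=\lfloor\sqrt{d}\rfloor$, and that $n=0$ degenerates by the collapse $[\dots,1,0,1,\dots]=[\dots,2,\dots]$) are sound, and they also silently repair the misprint in the statement: the leading partial quotient must be $p_{k+1}n+1$, not $p_{k}n+1$, as the paper's four special cases confirm.
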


\begin{proof}
Let us recall the formula \eqref{rip} and Lemma \ref{lemma1}. The first few powers of the matrix associated with the scf of $\sqrt{2}$ follow.
\[
\begin{bmatrix}
1 & 2\\
1 & 1
\end{bmatrix}^2
=\begin{bmatrix}
3 & 4\\
2 & 3
\end{bmatrix};
\quad
\begin{bmatrix}
1 & 2\\
1 & 1
\end{bmatrix}^3
=\begin{bmatrix}
7 & 10\\
5 & 7
\end{bmatrix}
\]
and
\[
\begin{bmatrix}
1 & 2\\
1 & 1
\end{bmatrix}^4
=\begin{bmatrix}
17 & 24\\
12 & 17
\end{bmatrix};
\quad
\begin{bmatrix}
1 & 2\\
1 & 1
\end{bmatrix}^5
=
\begin{bmatrix}
41 & 58 \\
29 & 41
\end{bmatrix}.
\]
The $(k+1)$th $\&$ $k$th power matrices jointly give a formula with 2 repeated $k$ times. Let us now apply the procedure used in deriving the scf for period 3 in subsection 3.3.3. Let $\sqrt{d} = \sqrt{a^2 +b} = a+ (\sqrt{a^2 +b} -a)$. Put $y =\sqrt{a^2 +b} -a$.

If $\sqrt{d} =[a; \overline {2, 1, a-1, 1, 2, 2a}].$ Then
\[
y = \cfrac{1}{2+ \cfrac{1}{1+ \cfrac {1}{a-1 +\cfrac{1} {2 +\cfrac{1}{1+ \cfrac{1}{y+2a}}}}}}
\]
after some long calculation leads to $3 y^2 + 6a y - (2a+1) =0.$ Solving the quadratic equation, we get the positive root: $y = -a +\sqrt{a^2 +\frac{2a+1}{3}}$. This implies that $b=\frac{2a+1}{3}$ which must be an integer if $d$ is to be an integer. The solution $a=3n+1$ gives $b=2n+1$.

When $\sqrt{d} =[a; \overline {2, 2, 1, a-1, 1, 2, 2, 2a}]$, the same procedure gives the equation: $7y^2 +14ay -(6a+1) =0$ whose positive root yields $b=\frac{6a+1}{7}$ and for $d$ to be an integer we have $a=7n+1$ and $b=6n+1.$

With $\sqrt{d} =[a; \overline {2, 2, 2, 1, a-1, 1, 2, 2, 2, 2a}]$ the procedure gives the equation: $17y^2 +34ay -(14a+3) =0$ whose positive root yields $b=\frac{14a+3}{17}$ and for $d$ to be an integer we have $a=17n+1$ and $b=14n+1.$

And if 2 is repeated $k$ times in the period, using \eqref{rip} we get the equation: $p_{k+1} y^2 +2p_{k+1} ay -(2p_{k} a+ p_{k-1}) =0$ whose positive root yields $b=\frac{2p_{k}a+ p_{k-1}}{p_{k+1}}$ and for $d$ to be an integer we have $a = p_{k+1}n+1$ and $b= 2p_{k}n+1$ where $p_k$ is the numerator of $c_k$ of $\sqrt{2}$.
\end{proof}

\section{Formulae with central term $=a_0$}

\begin{proposition}
Let $a_n$ be the central term of the period of the scf of $\sqrt{d} = \sqrt{a^2+b}$. Then $ a_n=a_0 \Longleftrightarrow b \equiv 2 \pmod 4.$
\end{proposition}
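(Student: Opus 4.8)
The plan is to argue through the quantities $P_i,Q_i$ of the continued fraction algorithm described in the introduction, normalized by $P_1=a_0=a$, $Q_0=1$, $Q_1=b$, together with the relations $Q_{i-1}Q_i=d-P_i^2$ and $a_iQ_i=P_i+P_{i+1}$ and the palindromic symmetry of the period, namely $P_i=P_{\ell+1-i}$ and $Q_i=Q_{\ell-i}$. Since $a_n=a_0$ is a statement about the \emph{central} term, the period length is even, say $\ell=2n$; the symmetry then gives $P_n=P_{n+1}$, and substituting this into $a_nQ_n=P_n+P_{n+1}$ yields the basic formula $a_nQ_n=2P_n$ for the central partial quotient.

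For the implication $a_n=a_0\Rightarrow b\equiv2\pmod4$ I would first pin down the central data. Because every $P_i$ satisfies $1\le P_i\le[\sqrt d]=a_0$, the equation $a_0Q_n=2P_n\le 2a_0$ forces $Q_n\le2$; and $Q_n=1$ cannot occur in the interior of the period (the value $1$ is reached only at the end of a period), so $Q_n=2$ and hence $P_n=a_0$. Now $b=d-a_0^2=d-P_n^2=Q_{n-1}Q_n=2Q_{n-1}$, so the claim $b\equiv2\pmod4$ is equivalent to $Q_{n-1}$ being odd. To prove the latter I would invoke the standard norm identity $p_{n-1}^2-dq_{n-1}^2=(-1)^nQ_n=\pm2$, which follows from the algorithm relations, so that $p_{n-1}^2-dq_{n-1}^2\equiv2\pmod4$ with $\gcd(p_{n-1},q_{n-1})=1$. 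A short case analysis modulo $4$ then does the job: $q_{n-1}$ cannot be even (else $p_{n-1}^2\equiv2\pmod4$), so $q_{n-1}$ is odd and $d\equiv p_{n-1}^2-2\pmod4$, which leaves $d\equiv2$ or $3\pmod4$; feeding each possibility back into $d=a_0^2+2Q_{n-1}$, and separating the cases $a_0$ even and $a_0$ odd, forces $2Q_{n-1}\equiv2\pmod4$, i.e. $Q_{n-1}$ odd. This settles the forward direction.

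For the converse I would begin from $b\equiv2\pmod4$, writing $Q_1=b=2\kappa$ with $\kappa$ odd, and try to produce an interior index sitting at the centre of an even period with $Q_n=2$ and $P_n=a_0$; the matrix/Rippon--Taylor relation \eqref{rip} and the two-adic behaviour of the recursion $Q_{i-1}Q_i=d-P_i^2$ are the natural tools. I expect this to be the main obstacle, and the delicate point is precisely that $b\equiv2\pmod4$ constrains only $Q_1$, whereas $a_n=a_0$ is a statement about the \emph{central} norm $Q_n$: one must show that the value $2$ actually occurs at the centre of the period and not merely somewhere in it. Controlling where the norm $2$ appears from congruence information about $b$ alone appears to require an additional structural input finer than $b\bmod 4$, so I would concentrate the effort there and treat this sufficiency direction as the crux of the argument.
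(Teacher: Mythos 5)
Your forward implication ($a_n=a_0\Rightarrow b\equiv 2\pmod 4$) is correct and complete in outline: the symmetry of an even period $\ell=2n$ gives $P_n=P_{n+1}$, hence $a_nQ_n=2P_n$; with $a_n=a_0$ and $P_n\le a_0$ this forces $Q_n\le 2$, and since $Q_i=1$ occurs only at multiples of $\ell$, you get $Q_n=2$, $P_n=a_0$, and $b=d-P_n^2=Q_{n-1}Q_n=2Q_{n-1}$; finally the norm identity $p_{n-1}^2-dq_{n-1}^2=(-1)^nQ_n=\pm2$ together with your mod-$4$ case analysis (the cases $a_0$ even, $d\equiv 2$ and $a_0$ odd, $d\equiv 3$ both close) yields $Q_{n-1}$ odd. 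Each ingredient you invoke is standard and correctly deployed. For comparison: the paper states this proposition with no proof at all — it sits bare at the head of Section 4, and only its consequence that $b$ is even is used afterwards — so on this half you have supplied what the source omits.

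The converse, which you rightly isolate as the crux and leave open, cannot be completed, because it is false. Take $d=15=3^2+6$: here $b=6\equiv 2\pmod 4$ and the period is even ($\ell=2$), yet $\sqrt{15}=[3;\overline{1,6}]$ has central term $a_1=1\neq 3=a_0$. The paper's own two-term formula $\sqrt{(mn)^2+2n}=[mn;\overline{m,2mn}]$ produces infinitely many such examples: for odd $n\ge 3$ one has $b=2n\equiv 2\pmod 4$ while the central term is $m\neq mn=a_0$ (e.g.\ $d=42$, $\sqrt{42}=[6;\overline{2,12}]$). Your diagnosis of why a proof resists — that $b\bmod 4$ constrains only $Q_1$ and cannot force the value $2$ to appear as the \emph{central} norm — is exactly the failure mechanism: in these examples the central norm is $Q_n=2n$, which is $\equiv 2\pmod 4$ but larger than $2$, so $a_n=2P_n/Q_n=m<a_0$. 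What is true, and what your forward argument actually establishes, is the equivalence $a_n=a_0\Longleftrightarrow(Q_n=2$ and $P_n=a_0)$, and the one-way implication $a_n=a_0\Rightarrow b\equiv 2\pmod 4$; the biconditional with ``$b\equiv 2\pmod 4$'' as stated in the paper is simply wrong, and no refinement of the two-adic bookkeeping can rescue it. You should record the proposition as a one-directional statement rather than spend further effort on the sufficiency direction.
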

This implies that $b$ is always even, and $d \equiv a \pmod2$; $d$ is odd(even) if and only if $a$ is odd(even).

\subsection{Formulae with $\ell (d)=6$}

Kraitchik \cite[p.41]{kraitchik} gives two particular formulas ($m=1, 2$):
\begin{gather*}
\sqrt{(3n+1)^2 +(4n+2)} =[3n+1; ~ \overline{1, 2, 3n+1, 2, 1, 6n+2}],\\
\sqrt{(9n+2)^2 +(8n+2)} =[9n+2; ~ \overline{2, 4, 9n+2, 4, 2, 18n+4}].
\end{gather*}

\subsection{Formulae with $\ell (d)=8$}

We find in Kraitchik's book \cite[p.47]{kraitchik}:
\[
\sqrt{(7n+5)^2 +(8n+6)} =[7n+5; ~ \overline{1, 1, 3, 7n+5, 3, 1, 1, 14n+10}], \, n \in \mathbb{N}_0.
\]
It is a special case $(m=2)$ of a {\bf general formula} for any fixed $m \in \mathbb{N}\setminus \lbrace 1 \rbrace$:
\begin{align}
& \sqrt{\lbrace(2m^2 -1)n +2m^2 -m-1 \rbrace^2 +(4mn+ 4m -2)}\notag\\
& = \left[(2m^2 -1)n +2m^2 -m-1; \right.\notag \\
& \left. \overline{  m-1, 1, 2m-1, (2m^2 -1)n +2m^2 -m-1, 2m-1,} \right. \notag \\
& \left. \overline {1, m-1, 2\lbrace(2m^2 -1)n +2m^2 -m-1 } \right]. \label{amrik8a}
\end{align}
It can be proved by using  the continued fraction algorithm. The next special case $(m=3)$, true for $n=0, 1, 2, \dots$, follows.
\[
\sqrt{(17n+14)^2 +(12n+10)} =[17n+14; ~ \overline{2, 1, 5, 17n+14, 5, 1, 2, 34n+28}].
\]

\subsection{Formula with $\ell (d)=10$}

This formula ($n=1, 2, 3, \dots$) gives a combination of 1 and 2:
\begin{equation}
\sqrt{(11n+1)^2 +2(8n+1)} =[11n+1; ~ \overline{1, 2, 1, 2, 11n+1, 2, 1, 2, 1, 22n+2}]. \label{amrik10a}
\end{equation}
We also have a more general formula ($n=1, 2, 3, \dots$):
\begin{equation}
\sqrt{(9n+3)^2 +18} =[9n+3; ~ \overline{n, 2, 1, 2n, 9n+3, 2n, 1, 2, n, 18n+6}]. \label{amrik10b}
\end{equation}
I discovered an akin formula with only four fixed quotients:
\begin{equation}
\sqrt{(9n+6)^2 +18} =[9n+6; ~ \overline{n, 1, 2, 2n+1, 9n+6, 2n+1, 2, 1, n, 18n+12}]. \label{amrik10c}
\end{equation}
We also have these formulas ($n=0, 1, 2, \dots$)  
\begin{align}
& \sqrt{(17n +15)^2 +(20n +18) } \notag =[17n +15; \\
& \overline{1, 1, 2, 3, 17n +15, 3, 2, 1, 1, 2(17n +15)}].
\end{align}

\begin{align}
& \sqrt{(27n+18)^2 +(44n+30)} =[27n+18; \notag\\
& \overline{1, 4, 2, 2, 27n+18, 2, 2, 4, 1, 54n+36}]. \label{amrik10d}
\end{align}

\begin{align}
& \sqrt{(33n +21)^2 +(28n +18) } \notag =[33n +21; \\
& \overline{2, 2, 1, 4, 33n +21, 4, 1, 2, 2, 2(33n +21)}].
\end{align}

\begin{align}
& \sqrt{(321n +21)^2 +(152n +10) } \notag =[321n +21; \\
& \overline{4, 4, 2, 8, 321n +21, 8, 2, 4, 4, 2(321n +21)}].
\end{align}
All these formulas can be proved easily by means of the continued fraction algorithm. 

\subsection{Formula with $\ell (d)=12$}

We find this formula in \cite[p.51]{kraitchik}:
\begin{align}
& \sqrt{(23n+6)^2 +2 (18n+5)} =[23n+6; \notag \\
& \overline{1, 3, 1, 1, 2, 23n+6, 2, 1, 1, 3, 1, 46n+12}]; \quad n \in \mathbb{N}_0.
\end{align}
I obtained this formula:
\begin{align}
& \sqrt{(193n+11)^2 +2 (52n+3)} =[193n+11; \notag \\
& \overline{3, 1, 2, 2, 7, 193n+11, 7, 2, 2, 1, 3, 2(193n+11)}]; \quad n \in \mathbb{N}_0.
\end{align}

\subsubsection{Formula with $\ell (d)=14$}
We further have for $n=0, 1, 2, \dots$,
\begin{align}
& \sqrt{(153 n + 13)^2 + ( 2 (58 n + 5))} \notag\\
&=[153 n + 13; ~ \overline{2, 1, 1, 1, 3, 5, 153 n + 13, 5, 3, 1, 1, 1, 2, 2(153 n + 13)}].
\end{align}

\subsubsection{Formula with $\ell (d)=16$}
Furthermore, we have for $n=0, 1, 2, \dots$,
\begin{align}
& \sqrt{(217n +13)^2 + (356n +22) } \notag\\
&=[217n +13; ~ \overline{1, 4, 1, 1, 3, 2, 2, 217n +13, 2, 2, 3, 1, 1, 4, 1, 2(217n +13)}].
\end{align}

\subsubsection{Formula with $\ell (d)=18$}
Furthermore, we have for $n=0, 1, 2, \dots$,
\begin{align}
& \sqrt{(747n +11)^2 +2 (590 n + 9) } \notag\\
&=[747n +11; ~ \overline{1, 3, 1, 3, 7, 1, 1, 2, 747n +11, 2, 1, 1, 7, 3, 1, 3, 1, 2(747n +11)}].
\end{align}

\begin{align}
& \sqrt{(339n +137)^2 +(400n +162) } \notag =[339n +137; \\
& \overline{1, 1, 2, 3, 1, 1, 2, 3, 339n +137, 3, 2, 1, 1, 3, 2, 1, 1, 2(339n +137)}].
\end{align}

\begin{align}
& \sqrt{(1187n +850)^2 +(1008n +722) } \notag =[1187n +850; \\
& \overline{2, 2, 1, 4, 2, 2, 1, 4, 1187n +850, 4, 1, 2, 2, 4, 1, 2, 2, 2(1187n +850)}].
\end{align}

\subsubsection{Formula with $\ell (d)=20$}
Furthermore, we have for $n=0, 1, 2, \dots$,
\begin{align}
& \sqrt{(7199n +19)^2 +(2264n +6)} \notag\\
&=[7199n +19; ~ \overline{6, 2, 1, 3, 1, 1, 2, 1, 12, 7199n +19, 12, 1, 2, 1, 1, 3, 1, 2, 6, 2(7199n +19)}].
\end{align}

\subsubsection{Formula with $\ell (d)=22$}
Furthermore, we have for $n=0, 1, 2, \dots$,
\begin{align}
& \sqrt{(3201n +12)^2 +(5660n +22) } \notag\\
&=[3201n +12; ~ \overline{1, 7, 1, 1, 1, 2, 4, 1, 3, 2, 3201n +12, 2, 3, 1, 4, 2, 1, 1, 1, 7, 1, 2(3201n +12)}].
\end{align}

\subsubsection{Formula with $\ell (d)=26$}
Furthermore, we have for $n=0, 1, 2, \dots$,
\begin{align}
& \sqrt{(6763n +1354)^2 +(7980n +1598) } \notag =[6763n +1354; \\
& \overline{1, 1, 2, 3, 1, 1, 2, 3, 1, 1, 2, 3, 6763n +1354, 3, 2, 1, 1, 3, 2, 1, 1, 3, 2, 1, 1, 2(6763n +1354)}].
\end{align}

\begin{align}
& \sqrt{(42699n +6102)^2 +(36260n +5182) } \notag =[42699n +6102; \\
& \overline{2, 2, 1, 4, 2, 2, 1, 4, 2, 2, 1, 4, 42699n +6102, 4, 1, 2, 2, 4, 1, 2, 2, 4, 1, 2, 2, 2(42699n +6102)}].
\end{align}

\section{Formulas with replicating pair $\lbrace m, \, 2m \rbrace$, central term $=a_0$}
Define two sequences $A_{k+1} =4 A_k -A_{k-1}, \; B_{k+1} =4 B_k -B_{k-1}$ with $A_1=3, B_1 =1, A_0=1, B_0 =0.$ Then
\begin{theorem}\label{thm2}
\begin{align*}
& \sqrt{(A_{k}~ n +1)^2 +2(2B_{k} ~n +1)} = \left[A_{k} n +1; \right. \\
& \left. \overline{\text{1 \& 2 repeated k times}, (A_{k} n +1), \text{2 \& 1 repeated k times}, 2(A_{k} n +1)} \right], \notag
\end{align*}
\end{theorem}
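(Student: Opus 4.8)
The plan is to mirror the proof of the preceding repeated-$2$'s theorem: encode the palindromic part of the period in a matrix, apply the Rippon--Taylor relation \eqref{rip}, and read off $b=d-a_0^2$. Put
\[
M_1=\begin{bmatrix}1&1\\1&0\end{bmatrix},\qquad M_2=\begin{bmatrix}2&1\\1&0\end{bmatrix},
\]
so that the block ``$1,2$ repeated $k$ times'' corresponds to $(M_1M_2)^k$ and ``$2,1$ repeated $k$ times'' to $(M_2M_1)^k$. The decisive observation is that
\[
N:=M_2M_1=\begin{bmatrix}3&2\\1&1\end{bmatrix},\qquad M_1M_2=N^{\top},
\]
so both blocks are powers of the $\sqrt3$-matrix $N$ of Lemma \ref{lemma2} and of its transpose. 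Since $N$ has trace $4$ and determinant $1$, Cayley--Hamilton gives $N^{k+1}=4N^k-N^{k-1}$, so each entry of $N^k$ obeys the recurrence of the theorem; matching the entries of $N^0$ and $N^1$ identifies
\[
N^k=\begin{bmatrix}A_k&2B_k\\B_k&A_{k-1}\end{bmatrix},
\]
with $A_k,B_k$ exactly the sequences defined before the theorem and $A_{k-1}$ the lower-right entry (consistent with \eqref{eq:l2}).

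Next I would form the symmetric matrix $\begin{bmatrix}A&B\\B&C\end{bmatrix}$ of \eqref{rip} from the palindrome $a_1,\dots,a_{\ell-1}=(1,2)^k,a_0,(2,1)^k$, namely
\[
\begin{bmatrix}A&B\\B&C\end{bmatrix}=(N^{\top})^k\begin{bmatrix}a_0&1\\1&0\end{bmatrix}N^k .
\]
Carrying out the two products gives $A=A_k\,(a_0A_k+2B_k)$ and, after expansion, the numerator factors as $2a_0B+C=2\,(a_0A_k+2B_k)(2a_0B_k+A_{k-1})$. The common factor $a_0A_k+2B_k$ then cancels, and \eqref{rip} collapses to the clean expression
\[
b=\frac{2a_0B+C}{A}=\frac{2\,(2a_0B_k+A_{k-1})}{A_k}.
\]

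Finally I would set $a_0=A_kn+1$, so that $2a_0B_k+A_{k-1}=2A_kB_k\,n+(2B_k+A_{k-1})$ and hence
\[
b=4B_kn+\frac{2(2B_k+A_{k-1})}{A_k}.
\]
Here I invoke the identity $A_k=A_{k-1}+2B_k$, which holds because $A_k-A_{k-1}-2B_k$ satisfies the same recurrence $x_{k+1}=4x_k-x_{k-1}$ and vanishes at $k=0,1$ (equivalently it is the convergent recurrence $q_{2k}=2q_{2k-1}+q_{2k-2}$ for $\sqrt3$). This turns the residual fraction into $2$, yielding $b=4B_kn+2=2(2B_kn+1)$ and therefore $d=a_0^2+b=(A_kn+1)^2+2(2B_kn+1)$, as claimed. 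Conversely, running the same self-referential computation on $y=\sqrt d-a_0$ shows that the displayed continued fraction does evaluate to $\sqrt d$, while $0<b<2a_0+1$ (immediate from $A_k>2B_k$) certifies $a_0=\lfloor\sqrt d\rfloor$, so the expansion is genuinely the canonical one.

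I expect the main obstacle to be the bookkeeping in the two matrix multiplications, and in particular recognizing the factorization $2a_0B+C=2(a_0A_k+2B_k)(2a_0B_k+A_{k-1})$ that cancels against $A$; without it the formula for $b$ is an unwieldy quadratic in $a_0$ that hides the answer. The identity $A_k=A_{k-1}+2B_k$ is the remaining small ingredient that forces the constant term to come out to exactly $2$.
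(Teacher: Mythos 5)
Your proposal is correct and follows essentially the same route as the paper: both identify $A_k,B_k$ with the entries of powers of the $\sqrt{3}$-matrix $\begin{bmatrix}3&2\\1&1\end{bmatrix}$ from Lemma \ref{lemma2} and then extract $b$ from the Rippon--Taylor relation \eqref{rip}, with the floor/positivity check guaranteeing the expansion is canonical. If anything, your write-up is more complete than the paper's: where the paper verifies the cases $k=1,2,3,4$ by the self-referential quadratic and then asserts that the general case follows ``immediately'' from \eqref{rip}, you actually carry out the general-$k$ palindrome-matrix computation, including the key factorization $2a_0B+C=2\,(a_0A_k+2B_k)(2a_0B_k+A_{k-1})$ and the closing identity $A_k=A_{k-1}+2B_k$.
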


\begin{proof}
The sequences defined here give $A_k/B_k$:
\[
\frac{3}{1}, \, \frac{11}{4}, \, \frac{41}{15}, \, \frac{153}{56}, \, \frac{571}{209}, \dots
\]
And let us now recall Lemma \ref{lemma2}. The first five powers of the matrix are:
\[
\begin{bmatrix}
3 & 2\\
1 & 1
\end{bmatrix}^2
=\begin{bmatrix}
11 & 8\\
4 & 3
\end{bmatrix};
\quad
\begin{bmatrix}
3 & 2\\
1 & 1
\end{bmatrix}^3
=\begin{bmatrix}
41 & 30\\
15 & 11
\end{bmatrix}
\]
\[
\begin{bmatrix}
3 & 2\\
1 & 1
\end{bmatrix}^4
=\begin{bmatrix}
153 & 112\\
56 & 41
\end{bmatrix};
\quad
\begin{bmatrix}
3 & 2\\
1 & 1
\end{bmatrix}^5
=\begin{bmatrix}
571 & 418\\
209 & 153
\end{bmatrix}.
\]
Thus $A_k =q_{2k} =p'_{2k-1}$ and $B_k =q_{2k-1} =q'_{2k-1}$ in \eqref{eq:l2} in Lemma \ref{lemma2}.

Applying the procedure followed in the proof of earlier theorem, we have $\sqrt{d} =[a; \overline {1, 2, a, 2, 1, 2a}]$:
\[
y = \cfrac{1}{1+ \cfrac{1}{2+ \cfrac{1}{a +\cfrac{1} {2 +\cfrac{1}{1+ \cfrac{1}{y+2a}}}}}}
\]
which after some long calculation leads to $3 y^2 + 6a y - (4a+2) =0.$ Solving the quadratic equation, we get the positive root: $y = -a +\sqrt{a^2 +\frac{4a+2}{3}}$. For $d$ to be an integer we have the solution $a=3n+1$ gives $b=4n+2$.

When $\sqrt{d} =[a; \overline {1, 2, 1, 2, a, 2, 1, 2, 1, 2a}]$, the same procedure gives the equation: $11y^2 +22ay -(16a+6) =0$ whose positive root yields $b=\frac{16a+6}{11}$ and for $d$ to be an integer we have $a=11n+1$ and $b=16n+2.$

With $\sqrt{d} =[a; \overline {1, 2, 1, 2, 1, 2, a, 2, 1, 2, 1, 2, 1, 2a}]$, the procedure gives the equation: $41y^2 +82ay -(60a+22) =0$ whose positive root yields $b=\frac{60a+22}{41}$ and for $d$ to be an integer we have $a=41n+1$ and $b=60n+2.$

With $\sqrt{d} =[a; \overline {1, 2, 1, 2, 1, 2, 1, 2, a, 2, 1, 2, 1, 2, 1, 2, 1, 2a}]$, we get the equation: $153y^2 +306y -(224a+82) =0$ whose positive root yields $b=\frac{224+82}{153}$ and for $d$ to be an integer we have $a=153n+1$ and $b=224n+2.$

And for $k$ times the pairs $(1, 2$ and $(2, 1)$, we immediately obtain the desired result by applying the formulas in \eqref{rip}.
\end{proof}

We now give a generalization of the preceding theorem.  
\begin{general}
Let $\sqrt{m^2 +2} =[m; \overline {m, 2m}]$ and $q_{2k}, \, q_{2k-1}$ be the denominators of its convergents; then the multiplier for convergents with gap 2 equals $M =2 (m^2 +1)$, and so:
$\displaystyle q_{2k+2} = M q_{2k} - q_{2k-2}; \quad q_{2k+1} = M q_{2k-1} - q_{2k-3}; \; q_0=1, \; q_{1} =m.
\displaystyle$
This gives $q_{2}= 2m^2 +1, \; q_{3}= 2m (m^2 +1), \; q_{4}= 2m^2 (2m^2 +3)+1$. We then have:
\begin{align*}
& \sqrt{(q_{2k}~ n +m)^2 +2(2q_{2k-1} ~n +1)} = \left[q_{2k}~ n +m; \right. \\
& \left. \overline{(m, 2m) ~\text{repeated  ~k ~times}, (q_{2k}~ n +m), (2m, ~m) ~\text{repeated} ~k ~\text{times}, 2(q_{2k}~ n +m)} \right].
\end{align*}
\end{general}

\subsection{Formulas for case $m=1$}
We noted some formulas for Theorem \ref{thm2}. 

\subsection{Formulas for case $m=2$}

The convergents of $\sqrt{6} =[2; \overline{2, 4}]$ are:
\[
\frac{5}{2}, \, \frac{22}{9}, \, \frac{49}{20}, \, \frac{218}{89}, \, \frac{485}{198}, \, \frac{2158}{881}, \, \frac{48271}{1960}, \, \frac{21362}{8721}, \dots
\]
They yield these formulas:

\begin{equation}
\sqrt{(9n+2)^2 +2(4n+1)} =\left[9n+2; \overline{2, 4, 9n+2, 4, 2, 2(9n+2)}\right].
\end{equation}

\begin{equation}
\sqrt{(89n+2)^2 +2(40n+1)} =\left[89n+2; \overline{2, 4, 2, 4, 89n+2, 4, 2, 4, 2, 2(89n+2)}\right].
\end{equation}

\begin{align}
& \sqrt{(881n+2)^2 +2(396n+1)} =\left[881n+2; \right. \notag \\
& \left. \overline{2, 4, 2, 4, 2, 4, 41n+1, 4, 2, 4, 2, 4, 2, 2(881n+2)}\right].
\end{align}

\subsection{Formulas for case $m=3$}

The convergents of $\sqrt{11} =[3; \overline{3, 6}]$ are:
\[
\frac{10}{3}, \, \frac{63}{19}, \, \frac{199}{60}, \, \frac{1257}{379}, \, \frac{3970}{1197}, \, \frac{25077}{7561}, \, \frac{79201}{23880}, \, \frac{500283}{150841}, \dots
\]
They lead to the following formulas.

\begin{equation}
\sqrt{(19n+3)^2 +2(2\cdot 3n+1)} =\left[19n+3; \overline{3, 6, 19n+3, 6, 3, 2(19n+3)}\right].
\end{equation}

\begin{equation}
\sqrt{(379n+3)^2 +2(2\cdot 60n+1)} =\left[379n+3; \overline{3, 6, 3, 6, 379n+3, 6, 3, 6, 3, 2(379n+3)}\right].
\end{equation}

\begin{align}
& \sqrt{(7561n+3)^2 +2(2\cdot 1197n+1)} =\left[7561n+3; \right. \notag \\
& \left. \overline{3, 6, 3, 6, 3, 6, 7561n+3, 6, 3, 6, 3, 6, 3, 2(7561n+3)}\right].
\end{align}

\subsection{Proof of the occurrence of $m, 2m$ once}

I now prove the formula wherein the pair $(m, 2m)$ occurs once ($k=1$) for any $m \in \mathbb{N}, \, n \in \mathbb{N}$:
\begin{align}
& \sqrt{\lbrace(2m^2 +1)n +m \rbrace^2 + 2(2mn+1)} =  \\
& [(2m^2 +1)n +m; ~ \overline{m, 2m, (2m^2 +1)n +m, 2m, m, 2(2m^2 +1)n +2m}]. \notag \label{amrik6k1}
\end{align}

\begin{proof}
\begin{align*}
& \sqrt {\lbrace(2m^2 +1) n +m \rbrace^2 +(4mn+2) }  \\
& =( (2m^2 +1) n +m ) +  \sqrt {\lbrace (2m^2 +1) n +m \rbrace^2 +(4mn+2)} -( (2m^2 +1) n +m )  \\
& =( (2m^2 +1) n +m ) +  \frac {4mn+2}{\sqrt {\lbrace (2m^2 +1) n +m \rbrace^2 +(4mn+2)} +( (2m^2 +1) n +m )}.
\end{align*}

So $a_0 = (2m^2 +1) n +m$ and the expression on the extreme right on being inverted becomes
\[
 m +  \frac{\sqrt {\lbrace (2m^2 +1) n +m \rbrace^2 +(4mn+2)} -( (2m^2 -1) n +m )} {4mn+2}.
\]

So  $a_1 = m$ and the expression on the extreme right becomes
\[
 \frac {2mn+1}{\sqrt {\lbrace (2m^2 +1) n +m \rbrace^2 +(4mn+2)} +( (2m^2 -1) n +m )}
\]
which on being inverted becomes
\[
2m + \frac{\sqrt {\lbrace (2m^2 +1) n +m \rbrace^2 +(4mn+2)} -( (2m^2+1) n +m )}  {2mn+1}.
\]

So  $a_2 = 2m$ and the expression on the extreme right becomes
\[
 \frac {2}{\sqrt {\lbrace (2m^2 +1) n +m \rbrace^2 +(4mn+2)} +( (2m^2 +1) n +m )}
\]
which on being inverted becomes
\[
( (2m^2 +1) n +m ) + \frac{\sqrt {\lbrace (2m^2 +1) n +m \rbrace^2 +(4mn+2)} -( (2m^2+1) n +m )}  {2}.
\]

So $a_3 = (2m^2 +1) n +m$ and the expression on the extreme right becomes

\[
 \frac {2mn+1}{\sqrt {\lbrace (2m^2 +1) n +m \rbrace^2 +(4mn+2)} +( (2m^2 +1) n +m )}
\]
which on being inverted becomes
\[
2m + \frac{\sqrt {\lbrace (2m^2 +1) n +m \rbrace^2 +(4mn+2)} -( (2m^2 -1) n +m )}  {2mn+1}.
\]

So  $a_4 = 2m$ and the expression on the extreme right becomes
\[
 \frac {4mn+2}{\sqrt {\lbrace (2m^2 +1) n +m \rbrace^2 +(4mn+2)} +( (2m^2 -1) n +m )}
\]
which on being inverted becomes
\[
 m +  \frac{\sqrt {\lbrace (2m^2 +1) n +m \rbrace^2 +(4mn+2)} -( (2m^2 +1) n +m )} {4mn+2}.
\]

So $a_0=m$ and  the expression on the extreme right becomes
\begin{align*}
&  \frac {1}{\sqrt {\lbrace (2m^2 +1) n +m \rbrace^2 +(4mn+2)} +( (2m^2 +1) n +m )} \\
& =\frac {1}{ 2 (( (2m^2 +1) n +m )) +\sqrt {\lbrace (2m^2 +1) n +m \rbrace^2 +(4mn+2)} -( (2m^2 +1) n +m )}
\end{align*}

Hence, $a_6 =2 (( (2m^2 +1) n +m )$ and the loop begins all over again.
\end{proof}

\subsection{Proof of the occurrence of $m, 2m$ twice}

Similarly, the formula wherein the pair $(m, 2m)$ occurs twice($k=2$) for any $m \in \mathbb{N}, \, n \in \mathbb{N}$:
\begin{align}
& \sqrt{ \lbrace (2m^2 (2m^2 +3) +1)n +m \rbrace^2 + 2(2m (m^2 +1)n +1)} = \notag \\
& \left[(2m^2 (2m^2 +3) +1)n +m; ~ \overline{ m, 2m, m, 2m, (2m^2 (2m^2 +3) +1)n +m}, \right. \notag\\
& \left. \overline{ 2m, m, 2m, m, 2((2m^2 (2m^2 +3) +1)n +m) } \right].  \label{amrik8k2}
\end{align}
can also be proved by the algorithm.

The general formula can be proved using the procedure used earlier though the proof is somewhat tedious.

\section{Formulas with repeated triple $\lbrace1, 1, 3\rbrace$ }

Define the sequences $\lbrace p_k\rbrace$, $\lbrace q_k\rbrace$:
\begin{gather}
p_{-1} =-1, \quad p_{0} =1, \quad p_{k} = 8~ p_{k-1} + p_{k-2}\; \text{for} \; k\ge 1, \\
q_{-1} =4, \quad q_{0} =0, \quad q_{k} = 8~ q_{k-1} + q_{k-2}\;  \text{for} \; k\ge 1.
\end{gather}
Its associated matrix is
\begin{equation}
\begin{bmatrix}
7 & 2\\
4 & 1
\end{bmatrix}^k
=
\begin{bmatrix}
p_{k} & q_{k}/2\\
q_{k} & p_{k-1}+q_{k-1}/2
\end{bmatrix}.
\end{equation}

We then have:
\begin{theorem}
\begin{align*}
& \sqrt{\left\lbrace p_k ~ n + \frac{p_k +3}{2}\right\rbrace^2 + \left(2q_{k} ~n +q_{k} +2\right)}  =
\left[p_{k}~ n +\frac{p_k +3}{2}; \overline{(1, 1, 3)~ k ~\text{times}}, \right. \\
& \left. \overline {p_{k}~ n +\frac{p_k +3}{2}, (3, 1, 1) ~\text{repeated} ~k ~\text{times}, 2\left(p_{k}~ n +\frac{p_k +3}{2}\right)} \right].
\end{align*}
\end{theorem}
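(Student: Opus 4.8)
The plan is to follow the template of the two theorems already proved here, the one with repeated $2$'s and Theorem~\ref{thm2}, replacing the blocks $(2)$ and $(1,2)$ by the triple $(1,1,3)$. The first observation is that
\[
\begin{bmatrix} 1 & 1 \\ 1 & 0 \end{bmatrix}\begin{bmatrix} 1 & 1 \\ 1 & 0 \end{bmatrix}\begin{bmatrix} 3 & 1 \\ 1 & 0 \end{bmatrix}
=\begin{bmatrix} 7 & 2 \\ 4 & 1 \end{bmatrix},
\]
so the word $1,1,3$ carries the very matrix whose $k$-th power is recorded just before the statement; hence $(1,1,3)$ repeated $k$ times carries $N:=\begin{bmatrix} p_k & q_k/2 \\ q_k & p_{k-1}+q_{k-1}/2 \end{bmatrix}$. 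Because every $A_x=\begin{bmatrix} x & 1 \\ 1 & 0\end{bmatrix}$ is symmetric, the reversed block $3,1,1$ repeated $k$ times carries $N^{T}$.

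Next I would write down the matrix of the palindromic part $a_1,\dots,a_{\ell-1}$ of the period, namely $(1,1,3)^{k},\,a_0,\,(3,1,1)^{k}$ with $a_0$ the central term. With $A_{a_0}=\begin{bmatrix} a_0 & 1 \\ 1 & 0\end{bmatrix}$ this matrix equals $N\,A_{a_0}\,N^{T}$, which is automatically symmetric and so has the shape $\begin{bmatrix} A & B \\ B & C\end{bmatrix}$ required by \eqref{rip}. Expanding the product gives $A=p_k(p_k a_0+q_k)$ together with explicit linear-in-$a_0$ expressions for $B$ and $C$; substituting into $b=(2a_0 B+C)/A$ and dividing the (quadratic) numerator by $A$ leaves the compact formula
\[
b=\frac{2\bigl(q_k a_0+p_{k-1}+\tfrac12 q_{k-1}\bigr)}{p_k}.
\]
Equivalently, in the style of the earlier proofs, the quantity $y=\sqrt d-a_0$ satisfies the reduced quadratic $p_k\,y^2+2p_k a_0\,y-(2q_k a_0+2p_{k-1}+q_{k-1})=0$.

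It then remains to decide for which $a_0$ this $b$ is a positive integer below $2a_0+1$. The decisive identity is $q_k=4p_{k-1}+q_{k-1}$ (equivalently $2(p_k-p_{k-1})=3q_k+q_{k-1}$), which I would obtain by induction from $p_k=8p_{k-1}+p_{k-2}$, $q_k=8q_{k-1}+q_{k-2}$, or directly by comparing entries in $N^{k+1}=N^{k}\begin{bmatrix}7&2\\4&1\end{bmatrix}$. Using it one checks that setting $a_0=p_k n+\tfrac{p_k+3}{2}$—an integer since every $p_k$ is odd—turns the formula for $b$ into precisely $b=2q_k n+q_k+2$, the asserted value; since then $0<b<2a_0+1$, we indeed have $a_0=\lfloor\sqrt d\,\rfloor$, so the displayed expansion is that of $\sqrt d$.

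I expect the only real difficulty to be bookkeeping: verifying that $N A_{a_0} N^{T}$ is symmetric with the stated entries and that the numerator of $2a_0B+C$ is divisible by $A=p_k(p_k a_0+q_k)$ with no remainder. As in the earlier arguments it is prudent to settle the cases $k=1,2,3$ first, fixing the pattern of the reduced quadratic before invoking \eqref{rip} in general; the relation $\det N=p_k\bigl(p_{k-1}+\tfrac12 q_{k-1}\bigr)-\tfrac12 q_k^2=(-1)^k$ gives a handy consistency check at each stage.
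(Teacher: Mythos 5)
Your proposal is correct and follows exactly the route the paper intends: the paper's entire ``proof'' of this theorem is the single sentence ``This can be proved by mimicking the proof of Theorems 1 and 2,'' and your argument---identifying the block $(1,1,3)$ with the matrix $\begin{bmatrix}7&2\\4&1\end{bmatrix}$, taking its $k$-th power $N$, forming the symmetric palindrome matrix $N A_{a_0} N^{T}$, and applying Lemma 1, i.e.\ \eqref{rip}, to get $b=\bigl(2q_k a_0+2p_{k-1}+q_{k-1}\bigr)/p_k$---is precisely that mimicry, with the details the paper omits (the factorization $2a_0B+C=2(p_ka_0+q_k)(q_ka_0+p_{k-1}+\tfrac12 q_{k-1})$ cancelling against $A=p_k(p_ka_0+q_k)$, the entry identities $q_k=4p_{k-1}+q_{k-1}$, $p_k=7p_{k-1}+2q_{k-1}$ from $N^{k+1}=N^k\begin{bmatrix}7&2\\4&1\end{bmatrix}$, the parity of $p_k$, and the bound $0<b<2a_0+1$ guaranteeing $a_0=\lfloor\sqrt d\rfloor$) all correctly supplied and checked. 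If anything, your write-up is more rigorous than the paper's own treatment, since it handles general $k$ directly rather than extrapolating from the cases $k=1,2,3$.
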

\begin{proof}
This can be proved by mimicking the proof of Theorems 1 and 2.   
\end{proof}
The first three formulas yielded by this theorem follows.

\begin{equation}
\sqrt{(7n+5)^2 +(8n+6)} =[7n+5; ~ \overline{1, 1, 3, 7n+5, 3, 1, 1, 2(7n+5)}].
\end{equation}

\begin{align}
& \sqrt{(57n+30)^2 +(64n+34)} =\left [57n+30; \right. \notag \\
& \left. \overline{1, 1, 3, 1, 1, 3, 57n+30, 3, 1, 1, 3, 1, 1, 2(57n+30)} \right].
\end{align}

\begin{align}
& \sqrt{(463n+233)^2 +(520n+262)} =\left[463n+233; \right. \notag \\
& \left. \overline{1, 1, 3, 1, 1, 3, 1, 1, 3, 463n+233, 3, 1, 1, 3, 1, 1, 3, 1, 1, 2(463n+233)}\right].
\end{align}

\section{Repeated odd partial quotients}

\subsection{General formula for $\ell -1$ quotient $(2m+1), \, m\ge 0$}

\subsubsection {Methodology}

As derived earlier, $\frac{(2m+1) +\sqrt{(2m+1)^2 +4}}{2} =[2m+1; \overline{2m+1}]$. Let $(2m+1)$ be the repeated partial quotient in the period of the scf for $-(2m+1) +\sqrt{(2m+1)^2 +4}=[0; m, 1, 1, m, 4m+2]$ using the formula \eqref{amrik5a}. Now if we calculate the inverse of its convergent at each quotient we find that the denominator is odd at the quotient $a_1,$ at $a_2$ and at $a_4$ while it is even at the quotient $a_3$ and at $a_5.$ That is, only in the truncated fractions $(a_1, a_2, a_3)$ and $(a_1, a_2, a_3, a_4, 2a_0)$ the denominator is even. The \emph{trial and error method} revealed that the truncations with even denominators yield the desired formulas but not those having odd denominators. We normally use the notation $c_k =\frac{p_k}{q_k}$; I will use capital letters, to avoid confusion.

Taking the truncation $(a_1, a_2, a_3) =(m, 1, 1)$, we get the convergent $c_3 = \frac{p_3}{q_3}$ which we denote by $C_1=\frac{P_1}{Q_1}$:
\begin{equation}
C_1=\frac{P_1}{Q_1}  =\frac{2m+1}{2} = \frac{2m+1}{2}, \label{amrik3}
\end{equation}
and its successor with truncation after five more quotients $c_8 = \frac{p_8}{q_8} =(m, 1, 1, m, 4m+2, m, 1, 1)$ which we denote by $C_2=\frac{P_2}{Q_2}$:
\begin{equation}
C_2=\frac{P_2}{Q_2} =\frac{(2m+1)^4 +3(2m+1)^2 +1}{2(2m+1)^3 +4(2m+1)}. \label{amrik3a}
\end{equation}

Next we take the full period: $(a_1, a_2, a_3, a_4, a_5) =(m, 1, 1, m, 4m+2)$ and get the convergent $c_5 = \frac{p_5}{q_5}$
and denote it by $C^\prime_1=\frac{P^\prime_1}{Q^\prime_1}$:
\begin{equation}
C^\prime_1=\frac{P^\prime_1}{Q^\prime_1} =(m, 1, 1, m, 4m+2)=\frac{(2m+1) [(2m+1)^2 +2]}{2[(2m+1)^2 +1]}, \label{amrik4}
\end{equation}
and its successor with two full periods: $(m, 1, 1, m, 4m+2, m, 1, 1, m, 4m+2)$ and get the convergent $c_{10} = \frac{p_{10}}{q_{10}}$
and denote it by $C^\prime_2=\frac{P^\prime_2}{Q^\prime_2}$::
\begin{equation}
C^\prime_2 = \frac{P^\prime_2}{Q^\prime_2}  = \frac{(2m+1)^6 + 5(2m+1)^4 +6(2m+1)^2 +1}{2[(2m+1)^5 + 4(2m+1)^3 + 3(2m+1)}. \label{amrik4a}
\end{equation}

From these relations we deduce the the multiplier ($M$) for further convergents after every five quotients is: $M = (2m+1) [(2m+1)^2 +3]$. Then
\begin{equation}
P_{k+1} =M P_k +P_{k-1}; \; Q_{k+1} =M Q_k +Q_{k-1}, \quad k\ge 2. \label{amrikm1}
\end{equation}

\begin{equation}
P^\prime_{k+1} =M P^\prime_k +P^\prime_{k-1}; \quad Q^\prime_{k+1} =M Q^\prime_k +Q^\prime_{k-1}, \quad k\ge 2. \label{amrikm2}
\end{equation}

I have defined here $C_k: = c_{5k-2}$ and $P_k:= p_{5k-2}, Q_k: = q_{5k-2}$ and $C^\prime_k := c_{5k}$ and $P^\prime_k := p_{5k}, Q^\prime_k =: q_{5k}$, in terms of the regular convergents.

\subsubsection{General formula}

\begin{general}
(i) Let $P_k$ and $Q_k$ be the numbers as defined above. Then
\begin{align*}
& \sqrt{\left(P_{k} ~n ~ - \frac{P_{k} -(2m+1)}{2}\right)^2 +\left(Q_{k} ~n ~- \frac{Q_{k} -2}{2}\right)} \\
&= \left[P_{k}~ n~ - \frac{P_{k} -(2m+1)}{2}; \right. \\
~&  \left.  \overline{(2m+1) ~\text{repeated}~ (3k-2) ~\text{times}, ~2 \left(P_{k}~ n~ -\frac{P_{k} -(2m+1)}{2}\right)}\right].
\end{align*}
(ii) Let $P^\prime_k$ and $Q^\prime_k$ be the numbers as defined above. We then have
\begin{align*}
& \sqrt{\left(P^\prime_{k} ~n ~ - \frac{P^\prime_{k} -(2m+1)}{2}\right)^2 +\left(Q^\prime_{k} ~n ~- \frac{Q^\prime_{k} -2}{2}\right)} \\
&= \left[P^\prime_{k}~ n~ - \frac{P^\prime_{k} -(2m+1)}{2}; \right. \\
~&  \left.  \overline{(2m+1) ~\text{repeated }~3k ~\text{times}, ~2 \left(P^\prime_{k}~ n~ -\frac{P^\prime_{k} -(2m+1)}{2}\right)}\right].
\end{align*}
\end{general}

\begin{proof}
We have here the partial quotient $(2m+1)$ repeated $(3k-2)$ times.  In the $2\times2$ matrix approach, this just corresponds to the matrix
$\begin{bmatrix}
2m+1 & 1\\
1 & 0
\end{bmatrix}$
raised to the $3k-2$ power.  Now we just need to pre- and post-multiply by the $2\times2$ matrices corresponding to the first and last
partial quotient, and work out the equation. It would match the LHS of our General Theorem 2.

From our discussion in subsection \ref{genmat}, we get:
\[
\begin{bmatrix}
2m+1 & 1\\
1 & 0
\end{bmatrix}^{3k-2}
=
\begin{bmatrix}
u_{3k-1} & u_{3k-2}\\
u_{3k-2} & u_{3k-3}
\end{bmatrix};
\;
\begin{bmatrix}
2m+1 & 1\\
1 & 0
\end{bmatrix}^{3k}
=
\begin{bmatrix}
u_{3k+1} & u_{3k}\\
u_{3k} & u_{3k-1}
\end{bmatrix}
\]
where $u_1=1, u_2=2m+1.$ As noted earlier, $u_{3k}$ is always even while $u_{3k+1}$ and $u_{3k+2}$ are always odd for $k=0, 1, 2, \dots$ The pair $(u_{3k-1}, 2u_{3k-2})$ corresponds to $(P_{k}, Q_{k})$ and yields the quotient $(2m+1)$ repeated $(3k-2)$ times while $(u_{3k+1}, 2u_{3k})$ corresponds to $(P^\prime_{k}, Q^\prime_{k})$ and yields the quotient $(2m+1)$ repeated $(3k)$ times in the period. The pair $(u_{3k}, 2u_{3k-1})$, with $u_{3k}$ even, is inadmissible as the two numbers are not relatively prime because 2 divides both. Let us, for example, take $m=1$. We then have:
\[
\begin{bmatrix}
3 & 1\\
1 & 0
\end{bmatrix}^{3}
=
\begin{bmatrix}
33 & 10\\
10 & 3
\end{bmatrix};
\quad
\begin{bmatrix}
3 & 1\\
1 & 0
\end{bmatrix}^{4}
=
\begin{bmatrix}
109 & 33\\
33 & 10
\end{bmatrix}
\]

\[
\begin{bmatrix}
3 & 1\\
1 & 0
\end{bmatrix}^{6}
=
\begin{bmatrix}
1189 & 360\\
360 & 109
\end{bmatrix};
\quad
\begin{bmatrix}
3 & 1\\
1 & 0
\end{bmatrix}^{7}
=
\begin{bmatrix}
3927 & 1189\\
1189 & 360
\end{bmatrix}
\]
We will see soon that these are exactly the numbers and the associated formulas; the only difference is that all three numbers, excepting the first element in the first row and the first column, are twice those given here. Applying \eqref{rip} completes the proof.
\end{proof}

\subsection{Period with $\ell -1$ units, case $m=0$}

(i). Let $F_{3k-1}$ be the $(3k-1)$th Fibonacci number. Then for $k\in\mathbb{N}$ and $n=1, 2, \dots$
\begin{align*}
& \sqrt{\left(F_{3k-1}~n -\frac{F_{3k-1} -1}{2} \right)^2 +2 \left(F_{3k-2}~n -\frac{F_{3k-2} -1}{2} \right)} \\
& = \left[F_{3k-1}~n -\frac{F_{3k-1} -1}{2}; \overline{1 ~\text{repeated} ~ (3k-2)~ \text{times}, ~ 2\left(F_{3k-1}~n -\frac{F_{3k-1} -1}{2} \right)} \right].
\end{align*}
(ii). Let $F_{3k+1}$ be the $(3k+1)$th Fibonacci number. Then
\begin{align*}
& \sqrt{\left(F_{3k+1}~n -\frac{F_{3k+1}-1}{2} \right)^2 +2 \left(F_{3k} ~n -\frac{F_{3k}}{2} \right) +1} \\
& = \left[F_{3k+1}~n -\frac{F_{3k+1} -1}{2}; \overline{1 ~\text{repeated} ~ 3k~ \text{times}, ~ 2\left(F_{3k+1}~n -\frac{F_{3kr+1} -1}{2} \right)} \right].
\end{align*}
This is based on the discussion in \cite{kraitchik} and gave a clue to the general result. We have these recurrence relations (with $M=4$):
\[
F_{3k+4} = 4 F_{3k+1} +F_{3k-2}; \; F_{3k+3} = 4 F_{3k} +F_{3k-3}; \; F_{3k+2} = 4 F_{3k-1} +F_{3k-4}.
\]

\begin{case}
\begin{gather*}
(i).~ \sqrt{n^2 +2n} =[n; \overline{1, 2n}], \, n \in \mathbb{N}.\\
(ii).~ \sqrt{(3n-1)^2 +2(2n-1) +1} =[3n-1; \overline{1, 1, 1, 2(3n-1)}], \, n \in \mathbb{N}.
\end{gather*}
\end{case}

\begin{case}
\begin{gather*}
(i).~ \sqrt{(5n-2)^2 +2(3n-1)} =[5n-2; \overline{1, 1, 1, 1, 2(5n-2)}], \, n \in \mathbb{N}.\\
(ii).~ \sqrt{(13n-6)^2 +2(8n-4) +1} =[13n-6; \overline{1, 1, 1, 1, 1, 1, 2(13n-6)}].
\end{gather*}
\end{case}

\begin{case}
(i).~
\begin{align*}
& \sqrt{(21n-10)^2 +2(13n-6)} \\
& =[21n-10; \overline{1, 1, 1, 1, 1, 1, 1, 2(21n-10)}].
\end{align*}
(ii).~
\begin{align*}
& \sqrt{(55n-27)^2 +2(34n-17)+1} \\
& =[55n-27; \overline{1, 1, 1, 1, 1, 1, 1, 1, 1, 2(55n-27)}].
\end{align*}
\end{case}

\subsection{Period with $\ell -1$ threes} Taking $m=1,$ we have
\[
\sqrt{13} =[3; \overline{1, 1, 1, 1, 6}]; \; -3 +\sqrt{13} =[0; \overline{1, 1, 1, 1, 6}].
\]
The relations derived earlier straightway yield $\frac{P_k}{Q_k}$ and $\frac{P^\prime_k}{Q^\prime_k}.$ We are also giving the associated terminating continued fractions for elucidation.

\begin{case}
(i). We have $\frac{P_1}{Q_1} = (1, 1, 1) =\frac{3}{2}.$ So
\[
\sqrt{(3n)^2 +2n} = \left[3n; \overline{3, 6n)} \right].
\]
(ii). Next, $\frac{P^\prime_1}{Q^\prime_1} = (1, 1, 1, 1, 6) = \frac{33}{20}.$ So
\[
\sqrt{(33n-15)^2 +(20n-9)} =[33n-15; \overline{3, 3, 3, 2(33n-15)}].
\]
\end{case}

\begin{case}
(i). We have $\frac{P_2}{Q_2} =(1, 1, 1, 1, 6, 1, 1, 1)  = \frac{109}{66}.$ So
\[
\sqrt{(109n - 53)^2 +(66n -32)} = \left[109n-53; \overline{3, 3, 3, 3, 2(109n-53)} \right].
\]
(ii). Next, $\frac{P^\prime_2}{Q^\prime_2} = (1, 1, 1, 1, 6, 1, 1, 1, 1, 6) = \frac{1189}{720}.$ So
\begin{align*}
& \sqrt{(1189n -593)^2 +(720n -359)} \\
& =[1189n -593; \overline{3, 3, 3, 3, 3, 3, 2(1189n -593)}].
\end{align*}
\end{case}

\begin{case}
(i). We have $\frac{P_3}{Q_3} =(1, 1, 1, 1, 6, 1, 1, 1, 1, 6, 1, 1, 1) =\frac{36\cdot109 +3}{36\cdot66 +2} = \frac{3927}{2378}.$ So
\begin{align*}
& \sqrt{(3927n - 1962)^2 +(2378n -1188)} = \left[3927n - 1962; \right. \\
& \left. \overline{3, 3, 3, 3, 3, 3, 3, 2(3927n - 1962)} \right].
\end{align*}
(ii). Next, $\frac{P^\prime_3}{Q^\prime_3} = (1, 1, 1, 1, 6, 1, 1, 1, 1, 6, 1, 1, 1, 1, 6) =\frac{36\cdot1189 +33}{36\cdot720 +20} =\frac{42837}{25940}.$ So
\begin{align*}
&\sqrt{(42837n  -21417)^2 +(25940n-12969)}\\
& =[42837n  -21417; \overline{3, 3, 3, 3, 3, 3, 3, 3, 3, 2(42837n  -21417)}].
\end{align*}
\end{case}

\subsection{Period with $\ell -1$ fives} $m=2$ gives $-5 +\sqrt{29}$ $=[0; \overline{2, 1, 1, 2, 29}]$.
Using the general formula yields

\begin{case}
\[
(i). ~\sqrt{(5n)^2 +2n} = \left[5n; \overline{5, 10n)} \right].
\]

\[
(ii). ~ \sqrt{(135n-65)^2+(52n -25)} = \left[ 135n-65; \overline{5, 5, 5, 2(135n-65)} \right].
\]
\end{case}

\begin{case}
\[
(i). ~ \sqrt{(701n - 348)^2 +(270n -134)} = \left[701n - 348; \overline{5, 5, 5, 5, 2(701n - 348)} \right].
\]

\begin{align*}
& (ii). ~ \sqrt{(18901n -9448)^2 +(7280n-3639)} = \\
& \left[18901n -9448; \overline{5, 5, 5, 5, 5, 5, 2(18901n -9448)} \right].
\end{align*}
\end{case}

\begin{case}
\begin{align*}
& (i). ~\sqrt{(98145n - 49070)^2 +(37802n -18900)} = \left[98145n - 49070; \right. \\
& \left. \overline{5, 5, 5, 5, 5, 5, 5, 2(98145n - 49070)} \right].
\end{align*}

\begin{align*}
& (ii). ~\sqrt{ (2646275n -1323135)^2 +(1019252n-509625)} =\\
& \left[2646275n -1323135; \overline{5, 5, 5, 5, 5, 5, 5, 5, 5, 2(2646275n -1323135)} \right].
\end{align*}
\end{case}

One can go on like this and deduce formulas for larger values of $m$ by varying convergents.

\section{General formula for $\ell -1$ quotient $(2m), m\ge 1$}

\subsection{Methodology}

Let $(2m), \, m\ge 1$ be the quotient that repeats in the period. We then use the scf for $\sqrt{(2m)^2 +4}$, the special case ($n=2$) of \eqref{amrik1} for the needed convergents. I hit on this scf after examining the formulas given on pages 59-60 of Kraitchik's book \cite{kraitchik}.
The convergents $c_k =\displaystyle \frac{p_k}{q_k}$ can be computed with the recurrence relations $k\ge 1$:
\begin{gather*}
p_{2k} =m p_{2k-1} +p_{2k-2}; \; p_{2k+1} =4m p_{2k} +p_{2k-1}; \; p_0=1, \; p_1=2m;\\
q_{2k} =m q_{2k-1} +q_{2k-2}; \; q_{2k+1} =4m q_{2k} +q_{2k-1}; \; q_0 =0, \; q_1=1.
\end{gather*}

\subsection{General formula}

We then have this general formula for any fixed $m$:
\begin{general}
\begin{align*}
& \sqrt{(q_{k+1} n)^2 +(p_{k+1}) n +m^2 +1} \\
& = \left[q_{k+1} n~ +m; \overline{2m ~ \text{repeated} ~k ~\text{times}, 2 (q_{k+1}n +m)}\right]
\end{align*}
\end{general}

\begin{proof}
Here we have to raise the associated matrix by a power $k.$ We have
\begin{align*}
& c_1=\frac{p_1}{q_1} =\frac{2m}{1}; \; c_2 = (2m, m) =\frac{p_2}{q_2} =\frac{2m^2 +1}{m}; \\
& c_3 = (2m, m,  4m) =\frac{p_3}{q_3} =\frac {4 (2 m^2 + 1) m + 2 m}{4 m^2 + 1},
\end{align*}
which give us the recurrence relations that we used in our method.
\end{proof}

\subsection{Period with $\ell -1$ twos}
We have $\sqrt{2^2 +4} =[2; \overline{1, 4}].$ Its convergents $c_k =\displaystyle \frac{p_k}{q_k}$ can be computed with the recurrence relations $k\ge 1$:
\begin{gather*}
p_{2k} =p_{2k-1} +p_{2k-2}; \; p_{2k+1} =4p_{2k} +p_{2k-1};  \; p_0=1, \; p_1=2;\\
q_{2k} =q_{2k-1} +q_{2k-2}; \; q_{2k+1} =4q_{2k} +q_{2k-1}; \; q_0 =0, \; q_1=1.
\end{gather*}
The convergents (numerators/denominators in \url {https://oeis.org/A041010} and \url{https://oeis.org/A041011}) are:
\[
\frac{2}{1}, ~\frac{3}{1}, ~\frac{14}{5}, ~\frac{17}{6}, ~\frac{82}{29}, ~\frac{99}{35}, ~\frac{478}{169}, ~\frac{577}{204}, ~\frac{2786}{985}, \dots
\]
The first five cases follow (cf. \cite[59]{kraitchik}\cite[p.327]{sierpinski}):
\begin{gather*}
\sqrt{n^2 +3n +2} =\left[n+1; \overline{2, 2n+2} \right],\\
\sqrt{(5n)^2 +14n +2} =\left[5n+1; \overline{2, 2, 10n+2} \right],\\
\sqrt{(6n)^2 +17n +2} =\left[6n+1; \overline{2, 2, 2, 12n+2} \right],\\
\sqrt{(29n)^2 +82n +2} =\left[29n+1; \overline{2, 2, 2, 2, 58n+2} \right],\\
\sqrt{(35n)^2 +99n +2} =\left[35n+1; \overline{2, 2, 2, 2, 2, 70n+2} \right].
\end{gather*}
The matrix
$\begin{bmatrix}
3 & 8\\
1 & 3
\end{bmatrix}^k
=
\begin{bmatrix}
p_{2k} & 8 q_{2k}\\
q_{2k} & p_{2k}
\end{bmatrix}$
gives even convergents and $(2k-1)$ twos.

\begin{remark}
Sierpinski takes $2n, \, 12n, \, 70n,$ etc. in place of $n, \, 6n, 35n,$ etc. in the above formulas and so missed all odd $d$'s such as $\sqrt{55} = \left[7; \overline{2, 2, 2, 14} \right]$.
\end{remark}

\subsection{Period with $\ell -1$ fours}

We have $\sqrt{4^2 +4} =[4; \overline{2, 8}].$ The convergents are
\[
\frac{4}{1}, ~\frac{9}{2}, ~\frac{76}{17}, ~\frac{161}{36}, ~\frac{1364}{305}, ~\frac{2889}{646}, ~\frac{24476}{5473}, ~\frac{51841}{11592}, ~\frac{439204}{98209}, \dots
\]
The first five cases(cf. \cite[60]{kraitchik}) follow:
\begin{case}
\[
\sqrt{(2n)^2 + 9 n + 5} =[2n+2; \overline{4, 4n+4}].
\]
\end{case}

\begin{case}
\[
\sqrt{(17n)^2 +76n +5} =[17n+2; \overline{4, 4, 34n+4}].
\]
\end{case}

\begin{case}
\[
\sqrt{(36n)^2 +161n +5} =[36n+2; \overline{4, 4, 4, 72n+4}].
\]
\end{case}

\begin{case}
\[
\sqrt{(305n)^2 +1364n +5} =[305n+2; \overline{4, 4, 4, 4, 610n+4}].
\]
\end{case}

\begin{case}
\[
\sqrt{(646n)^2 +2889n +5} =[646n+2; \overline{4, 4, 4, 4, 4, 1292n+4}].
\]
\end{case}

\subsection{Period with $\ell -1$ sixes}

We have $\sqrt{6^2 +4} =[6; \overline{3, 12}].$ Its convergents are:
\[
\frac{6}{1}, ~\frac{19}{3}, ~\frac{234}{37}, ~\frac{721}{114}, ~\frac{8886}{1405}, ~\frac{27379}{4329}, ~\frac{33743}{53353}, \dots
\]
The first few cases are:

\begin{case}
\[
\sqrt{(3n)^2 +19n +10} =[3n+3; \overline{6, 6n+6}].
\]
\end{case}

\begin{case}
\[
\sqrt{(37n)^2 +234n +10} =[37n+3; \overline{6, 6, 74n+6}].
\]
\end{case}

\begin{case}
\[
\sqrt{(114n)^2 +721n +10} =[114n+3; \overline{6, 6, 6, 228n+6}].
\]
\end{case}

\begin{case}
\[
\sqrt{(1405n)^2 + 8886n +10} =[1405n+3; \overline{6, 6, 6, 6, 2810n+6}].
\]
\end{case}

One can go on like this indefinitely.

\section{Concluding remarks}

I have obtained some formulas containing certain patterns in the continued fractions of square roots. There is still enough scope to look for more patterns.

\section{Acknowledgments}
I would like to thank Jeffrey Shallit for suggesting proof using matrices, and Stephen Lucas for the derivation in 3.1.3.

\end{document}